\newtheorem*{theoA}{Theorem A}
\newtheorem*{theoB}{Theorem B}
\newtheorem*{theoC}{Theorem C}
\newtheorem{theo}{Theorem}[section]
\newtheorem{lem}{Lemma}[section]
\newtheorem{conj}{Conjecture}[section]
\newtheorem{exm}{Example}[section]
\newtheorem{defi}{Definition}[section]
\newtheorem{rem}{Remark}[section]
\newtheorem{ques}{Question}[section]
\newcommand{\ol}{\overline}
\newcommand{\be}{\begin{equation}}
\newcommand{\ee}{\end{equation}}
\newcommand{\beas}{\begin{eqnarray*}}
\newcommand{\eeas}{\end{eqnarray*}}
\newcommand{\bea}{\begin{eqnarray}}
\newcommand{\eea}{\end{eqnarray}}
\numberwithin{equation}{section}
\begin{document}
\title[Set  shared by an entire function with its $k$-th derivatives ...]{Set  shared by an entire function with its $k$-th derivatives using normal families}
\date{}
\author[M. B. Ahamed ]{ Molla Basir Ahamed }
\date{}
\address{ Department of Mathematics, Kalipada Ghosh Tarai Mahavidyalaya, West Bengal 734014, India.}
\email{bsrhmd116@gmail.com, bsrhmd117@gmail.com.}
\maketitle
\let\thefootnote\relax
\footnotetext{2010 Mathematics Subject Classification: 30D45.}
\footnotetext{Key words and phrases: Normal families, entire functions, Set Sharing, Derivative.}
\footnotetext{Type set by \AmS -\LaTeX}
\setcounter{footnote}{0}

%%%%%%%%%%%%%%%%%%%%%%%%%%%%%%%%%%%%%%%%%%%%%%%%%%%%%%%%%%%%%%%%%%%%%%%%%%%%%%%%%%%%%%%%%%%%%%%%%%%%%%%%%%%%%%%
\begin{abstract} In this paper, we study a problem of non-constant entire function $ f $  that shares a set $ \mathcal{S}=\{a,b,c\} $ with its $ k $-th derivative $ f^{(k)} $, where $ a, b $ and $ c $ are any three distinct complex numbers. We have found a gap in the statement of the main result of \textit{Chang-Fang-Zalcman} \cite{Cha & Fan & al-ADM-2007} and with some help of the method used by \textit{Chang-Fang-Zalcman}, we have generalized the result of \textit{Chang-Fang-Zalcman} in a more compact form. As an application, we generalize the famous Br$\ddot{u}$ck conjecture \cite{Bru-1996}  with the idea of set sharing.
\end{abstract}
%%%%%%%%%%%%%%%%%%%%%%%%%%%%%%%%%%%%%%%%%%%%%%%%%%%%%%%%%%%%%%%%%%%%%%%%%%%%%%%%%%%%%%%%%%%%%%%%%%%%%%%%%%%%%%%%%%%%%%%%
\section{Introduction Definitions and Results}
As we all know, Nevanlinna theory plays an important part in considering value distribution of meromorphic functions and non-trivial solutions of some complex differential equations. A function $ f $ is called meromorphic if it is analytic in the complex plane $ \mathbb{C} $ except at isolated poles. In what follows, we assume that the reader is familiar with the basic Nevanlinna Theory \cite{Hay-CP-1964,Yan-SV-1993}. It will be convenient to let $ E $ denote any set of positive real real numbers of finite linear measure, not necessarily the same at each occurrence.  Let $ f $ and $ g $ be two meromorphic functions having the same set of $ a $-points with the same multiplicities, we then say that $ f $ and $ g $ share the value $ a $ $ CM $ (counting multiplicities) and if we do not consider the multiplicities then $ f $ and $ g $ are said to share the value $ a $ $ IM $ means the poles of $ f $.\par  When $ a=\infty $, the zeros of $ f-a $ means the poles of $ f $.\par
\begin{defi}
	For a non-constant meromorphic function $f$ and any set $\mathcal{S}\subset\mathbb{\ol C}$, we define \beas E_{f}(\mathcal{S})=\displaystyle\bigcup_{a\in\mathcal{S}}\bigg\{(z,p)\in\mathbb{C}\times\mathbb{N}:f(z)=a,\;\text{with multiplicity}\; p\bigg\}, \eeas \beas\ol E_{f}(\mathcal{S})=\displaystyle\bigcup_{a\in\mathcal{S}}\bigg\{(z,1)\in\mathbb{C}\times\{1\}:f(z)=a\bigg\}.\eeas
\end{defi}
\par If $E_{f}(\mathcal{S})=E_{g}(\mathcal{S})$ ($\ol E_{f}(\mathcal{S})=\ol E_{g}(\mathcal{S})$) then we simply say $f$ and $g$ share $\mathcal{S}$ Counting Multiplicities(CM) (Ignoring Multiplicities(IM)).\par
Evidently, if $\mathcal{S}$ contains one element only, then it coincides with the usual definition of $CM (IM)$ sharing of values. \par 

In 1926, Nevanlinna first showed that a non-constant meromorphic function on the complex plane $\mathbb{C}$ is uniquely determined by the pre-images, ignoring multiplicities, of $5$ distinct values (including infinity). A few years latter, he showed that when multiplicities
are taken into consideration, $4$ points are enough and in that case either the two functions coincides or one is the bilinear transformation of the other one.\par  Recall that the spherical derivative of a meromorphic function $ f $ on a plane domain is \beas  f^{\#}(z)=\frac{|f^{\prime}(z)|}{1+|f(z)|^2}.  \eeas 
\par The sharing value problem between an entire functions and their derivatives was first studied by \textit{Rubel-Yang} \cite{Rub & Yan-1976} where they proved that if a non-constant entire function $ f $ and $ f^{\prime} $ share two distinct finite numbers $ a $, $ b $ $ CM $, then $ f\equiv f^{\prime} .$\par In $ 1979 $, \textit{Mues-Steinmetz} \cite{Mue & Ste-1979} improved the above theorem in the following manner.
\begin{theoA}\cite{Mue & Ste-1979}
	Let $ f $ be a non-constant entire function. If $ f $ and $ f^{\prime} $ share two distinct values $ a $, $ b $ $ IM $ then $ f\equiv f^{\prime} $.
\end{theoA}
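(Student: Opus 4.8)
The plan is to argue by contradiction: suppose $f$ is a non-constant entire function with $f\not\equiv f'$ that shares $a,b$ IM with $f'$, and derive a contradiction using Nevanlinna theory. First I would record what IM-sharing buys us. If $f(z_0)=a$ then $f'(z_0)=a$, so the $a$-points of $f$ (and likewise the $b$-points) are common zeros of $f$ and $f'$, hence zeros of $f-f'$, and the two sets are disjoint because $a\neq b$. Comparing Taylor coefficients at such a point shows that when $a\neq0$ every $a$-point of $f$ is simple, while when $a=0$ every zero of $f$ is multiple (and symmetrically for $b$). Applying the second fundamental theorem to the entire functions $f$ and $f'$ — the sharing letting one replace $\overline N(r,1/(f'-a))$ by $\overline N(r,1/(f-a))$, and similarly at $b$ — one then obtains lower bounds showing that $\overline N(r,1/(f-a))+\overline N(r,1/(f-b))$ is of the same order as $T(r,f)$ and as $T(r,f')$.

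Next I would introduce the auxiliary function
\[
\Phi=\frac{f'-a}{f-a}-\frac{f'-b}{f-b}=(a-b)\,\frac{f-f'}{(f-a)(f-b)},
\]
along with the analogous $\Phi_1=\dfrac{f''-a}{f'-a}-\dfrac{f''-b}{f'-b}=(a-b)\,\dfrac{f'-f''}{(f'-a)(f'-b)}$. The sharing forces the potential poles of $\Phi$ at the $a$- and $b$-points to cancel, so $\Phi$ is entire; the lemma on logarithmic derivatives applied to the four quotients $f'/(f-a)$, $f'/(f-b)$, $f''/(f'-a)$, $f''/(f'-b)$ gives $m(r,\Phi)=S(r,f)+m(r,1/(f-a))+m(r,1/(f-b))+O(1)$ (whence $T(r,\Phi)\le T(r,f)+S(r,f)$) and $m(r,\Phi_1)=S(r,f)$, while a careful look at the IM — rather than CM — multiplicities confines the poles of $\Phi_1$ to the relatively few simple poles at common zeros of $f''$ with $f-a$ or $f-b$. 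Rewriting the first identity as the Riccati-type equation $f'=f-\tfrac{1}{a-b}\,\Phi\,(f-a)(f-b)$, and a companion one at the next order, turns the problem into a rigidity statement about $\Phi$.

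Feeding these small-growth objects back into the second fundamental theorem, the balanced estimate for $f$ is forced to be asymptotically sharp: essentially all zeros of $f-f'$ must come from the $a$- and $b$-points, which severely constrains $\Phi$. Working through the possibilities — using that $\Phi$ is entire of growth $\le T(r,f)+S(r,f)$ and that the location of its zeros is pinned down — one is driven to the case that $\Phi$ is a constant. If that constant is non-zero, the Riccati equation has constant coefficients with a genuine quadratic term, and all of its entire solutions are then constants — impossible since $f$ is non-constant; hence $\Phi\equiv0$, that is, $f\equiv f'$. The exceptional cases $a=0$ or $b=0$ are settled with the companion ``all zeros of $f$ are multiple'' bound, and the case that $f$ is a polynomial is dispatched by a Riemann--Hurwitz count: the $a$- and $b$-points then supply at least $\deg f+1$ distinct zeros of $f-f'$, exceeding $\deg(f-f')=\deg f$, a contradiction.

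The main obstacle, I expect, is precisely that the plain second-fundamental-theorem estimate for an entire function sharing two finite values is balanced — $T(r,f)\le\overline N(r,1/(f-a))+\overline N(r,1/(f-b))+S(r,f)$ — and by itself yields nothing; the whole weight of the argument lies in extracting the extra ramification information hidden in IM- versus CM-sharing (pinning down exactly which $a$- and $b$-points are multiple zeros of $f-f'$, and where $f''$ takes the value $a$ or $b$) and converting it into rigidity for $\Phi$. Keeping the error terms under control through the repeated use of the logarithmic derivative lemma and the two applications of the second fundamental theorem (to $f$ and to $f'$) is where the estimates are tightest.
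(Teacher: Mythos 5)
You should first note that the paper does not prove Theorem A at all: it is quoted from Mues--Steinmetz (1979) as a known result, so there is no in-paper argument to compare yours against, and your proposal has to stand on its own. Its preliminary observations are sound: IM-sharing does force every $a$-point of $f$ to be simple when $a\neq 0$ (and every zero of $f$ to be multiple when $a=0$), the function $\Phi=(a-b)(f-f')/\big((f-a)(f-b)\big)$ is then entire, the endgame ``$\Phi$ a nonzero constant gives $f'=Q(f)$ with $Q$ quadratic, hence $f$ constant'' is valid, and the polynomial case is correctly dispatched by counting zeros of $f-f'$.

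The genuine gap is the entire middle of the argument. Your estimate $T(r,\Phi)\le T(r,f)+S(r,f)$ is correct but useless: an entire function whose characteristic is merely bounded by $T(r,f)$ and whose zeros are ``pinned down'' is nowhere near forced to be constant, and the sentence ``working through the possibilities \dots one is driven to the case that $\Phi$ is a constant'' is precisely the content of the theorem rather than a step of its proof. Nothing in the sketch produces an auxiliary function that is genuinely small, i.e.\ of characteristic $S(r,f)$, which is what the actual Mues--Steinmetz argument hinges on (they build logarithmic-derivative combinations whose poles at the shared points cancel \emph{exactly}, prove each such combination is $S(r,f)$, and then run a case analysis on which of them vanish identically). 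Moreover your description of $\Phi_1=\frac{f''-a}{f'-a}-\frac{f''-b}{f'-b}$ is wrong: at a point $z_0$ with $f'(z_0)=a$ the term $(f''-a)/(f'-a)$ has a pole unless $f''(z_0)=a$, so generically $\Phi_1$ has a pole at \emph{every} $a$- and $b$-point of $f'$; hence $N(r,\Phi_1)$ is comparable to $T(r,f)$ and $\Phi_1$ is not small either. With neither auxiliary function under control, the reduction to ``$\Phi$ constant'' is unsupported, so the proposal as written does not constitute a proof.
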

\par We next recall the following well known definition of set sharing.\par Let $ S $ be a set of complex numbers and $ E_f(S)=\displaystyle\bigcup_{a\in S}\{z:f(z)=a\}, $ where each zero is counted according to its multiplicity. If we do not count the multiplicity, then the set $\bigcup_{a\in S}\{z:f(z)=a\}$ is denoted by $ \ol E_f(S). $\par If $ E_f(S)=E_g(S) $ we say that $ f $ and $ g $ share the set $ S $ $ CM $. On the other hand $ \ol E_f(S)=\ol E_g(S) $, we say that $ f $ and $ g $ share the set $ S $ $ IM $. Evidently, if $ S $ contains only one element, then it coincides with the usual definition of $ CM $ (respectively, $ IM $) sharing of values.\par We see from the following example that results of Rubel-Yang or Mues-Steinmetz are not in general true when we consider the sharing of a set of two elements instead of values.
\begin{exm}
	Let $ S =\bigg\{\displaystyle\frac{a}{3}, \displaystyle\frac{2a}{3}\bigg\}$, where $ a(\neq 0) $ be any complex number. Let $ f(z)=e^{-z}+a, $ then $ E_f(S)=E_{f^{\prime}}(S) $ but $ f\not\equiv f^{\prime} $.
\end{exm}
\par So for the uniqueness of an entire function and its derivative sharing a set, the cardinality of the range set should be at least three.
\par In this regard in $ 2003 $, using the properties of Normal families, \textit{Fang-Zalcman} \cite{Fan & al-JMMA-2003} obtained the following result.
\begin{theoB}\cite{Fan & al-JMMA-2003}
	Let $ S=\{0, a, b\} $, where $ a, b $ are two non-zero distinct complex numbers satisfying $ a^2\neq b^2 $, $ a\neq 2b $, $ a^2-ab+b^2\neq 0 $. If for a non-constant entire function $ f $, $ E_f(S)=E_{f^{\prime}}(S) $, then $ f\equiv f^{\prime} $.
\end{theoB}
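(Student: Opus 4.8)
The plan is to turn the set-sharing hypothesis into a single functional identity, bound the growth of $f$ by a normal-families argument, and finish with an elementary but delicate algebraic analysis in which the three nondegeneracy conditions on $a,b$ are consumed.

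Put $P(w)=w(w-a)(w-b)$. Then $P(f)$ and $P(f')$ are entire functions whose zeros, counted with multiplicity, form exactly the multisets $E_f(S)$ and $E_{f'}(S)$; hence $E_f(S)=E_{f'}(S)$ is equivalent to saying that $P(f)$ and $P(f')$ have the same zeros with the same multiplicities, so that $P(f')/P(f)=e^{h}$ for some entire $h$. Matching multiplicities at a common zero, and using that a zero of $f-c$ of order $p\ge 2$ is a zero of $f'$ of order $p-1$, one sees moreover that every point where $f$ or $f'$ assumes a value in $S$ is simple. Also $f$ is transcendental: if $\deg f\ge 2$ then $E_f(S)$ has $3\deg f$ points while $E_{f'}(S)$ has only $3(\deg f-1)$, and $\deg f=1$ is impossible since then $f'$ is a nonzero constant. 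Thus it suffices to prove $e^{h}\equiv 1$ and that $P(f')\equiv P(f)$ forces $f\equiv f'$.

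The crux is a normal-families argument showing that $f$ has finite order (equivalently, that $h$ is a polynomial). Assuming $f$ has infinite order, I would apply Zalcman's rescaling lemma along a sequence $z_n\to\infty$ detecting the rapid growth, choosing the normalization so that a rescaled copy of $f$ near $z_n$ converges locally uniformly to a non-constant entire function $g$ of order at most one. The real work is to arrange the rescaling so that it is compatible \emph{simultaneously} with the algebraic differential relation $P(f')=e^{h}P(f)$ and with the set sharing, so that the limit $g$ inherits a rigid constraint --- in effect that $g$ and $g'$ share $\{0,a,b\}$, or a degeneration of this --- which a non-constant entire function of order $\le 1$ cannot satisfy, by a Picard-type obstruction. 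Making the Zalcman limit retain both the differential equation and the shared set at once is, I expect, the main obstacle, and it is where the method of Chang--Fang--Zalcman enters. Once $f$ has finite order, $h$ is a polynomial; and comparing the second main theorem for $f$ with that for $f'$ --- using $E_f(S)=E_{f'}(S)$, so that $\sum_j N(r,1/(f-a_j))=\sum_j N(r,1/(f'-a_j))$ with all these zeros simple --- against $e^{h}=P(f')/P(f)$, and using standard logarithmic-derivative estimates, should force $e^{h}$ to be a nonzero constant.

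So $P(f')\equiv t\,P(f)$ for a constant $t$, and one shows $t=1$, whence
\[
P(f')-P(f)=(f'-f)\bigl((f')^{2}+(f-a-b)f'+(f-a)(f-b)\bigr)\equiv 0 .
\]
Either $f\equiv f'$ and we are done, or $\bigl(2f'+(f-a-b)\bigr)^{2}=D(f)$, where $D(w)=-3w^{2}+2(a+b)w+(a-b)^{2}$; that is, $D(f)$ is the square of an entire function. The hypothesis $a^{2}-ab+b^{2}\neq 0$ is precisely the condition that $D$ has two distinct roots, and then writing $D(f)$ as an entire square forces $f$ to have the shape $\alpha+\beta e^{2u}+\gamma e^{-2u}$ with $\beta\gamma\neq 0$ for some entire $u$. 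Feeding this back into $(f')^{2}+(f-a-b)f'+(f-a)(f-b)\equiv 0$ and into $E_f(S)=E_{f'}(S)$, and invoking the remaining hypotheses $a^{2}\neq b^{2}$ and $a\neq 2b$ to eliminate the surviving degenerate configurations, produces a contradiction. Hence the second alternative is impossible and $f\equiv f'$, which is the conclusion of the theorem.
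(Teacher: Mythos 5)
Your opening reductions are sound (the identity $P(f')/P(f)=e^{h}$, the simplicity of all $S$-points forced by the multiplicity matching, the transcendence of $f$), but each of the three steps that carry the real weight is left as a hope rather than a proof. For the growth bound you propose a Zalcman rescaling arranged to ``retain both the differential equation and the shared set,'' and you yourself flag this as the main obstacle; it is not clear such a rescaling exists, and it is not how the argument goes. The working mechanism is Lemma \ref{lm2} (the Fang--Zalcman normality criterion, here with $k=1$, where the multiplicity hypothesis is vacuous): since $E_f(S)=E_{f'}(S)$, $f(z)\in S$ implies $f'(z)\in S$, hence $|f'(z)|\le\max(|a|,|b|)$ at every such point; normality of the family $\{f(w+z):w\in\mathbb{C}\}$ on the unit disc, Marty's theorem (Lemma \ref{lm5}) and Clunie--Hayman (Lemma \ref{lm1}) then give order at most $1$ with no rescaling limit at all. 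Likewise ``$e^{h}$ is constant'' is asserted via an undeveloped second-main-theorem comparison; the argument that actually closes (the $abc=0$ subcase of Lemma \ref{lm7}) uses that $0\in S$ to expand $P(f')/P(f)$ into sums and products of logarithmic derivatives $f'/(f-v)$, so that Lemma \ref{lm6} yields $T(r,e^{h})=o(\log r)$ and hence $\deg h=0$.

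The more serious problem is logical: you assert ``one shows $t=1$'' with no argument, and then plan to spend the hypotheses $a^{2}\neq b^{2}$ and $a\neq 2b$ on the quadratic factor $(f')^{2}+(f-a-b)f'+(f-a)(f-b)$. But $t=1$ is precisely the statement that consumes those hypotheses: for general $a,b$ the constant $t$ can equal $-1$, giving the genuine solution $f(z)=\mathcal{C}e^{-z}+\tfrac{2}{3}(a+b)$ exactly when $(2a-b)(2b-a)(a+b)=0$, or a primitive cube root of unity when $a^{2}-ab+b^{2}=0$; this is Lemma \ref{lm8} specialised to $k=1$, $c=0$, and your proposal never rules these competitors out. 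Once $t=1$ and $f=ce^{\alpha z}+d$ (order at most $1$) are in hand, your branch analysis needs only $a^{2}-ab+b^{2}\neq 0$: your discriminant computation for $D$ is correct, and the vanishing of the quadratic factor forces $d\in\{a,b\}$ and $\alpha^{2}+\alpha+1=0$, which indeed yields $a^{2}-ab+b^{2}=0$; the conditions $a^{2}\neq b^{2}$ and $a\neq 2b$ play no role there. So the places where the arithmetic hypotheses on $a,b$ must act are misidentified, and as written the argument does not exclude the actual exceptional solutions.
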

\par In order to generalize the range set in the above theorem, in $ 2007 $ \textit{Chang-Fang-Zalcman} \cite{Cha & Fan & al-ADM-2007} obtained the following result.
\begin{theoC}\cite{Cha & Fan & al-ADM-2007}
	Let $ f $ be a non-constant entire function and let $ S=\{a, b, c\} $, where $ a, b $ and $ c $ are distinct complex numbers. If $ E_f(S)=E_{f^{\prime}}(S) $, then either
	\begin{enumerate}
		\item[(1)] $ f(z)=\mathcal{C}e^{z}; $ or
		\item[(2)] $ f(z)=\mathcal{C}e^{-z}+\frac{2}{3}(a+b+c)$ and $ (2a-b-c)(2b-c-a)(2c-a-b)=0; $ or
		\item[(3)] $ f(z)=\mathcal{C}e^{\frac{-1\pm i\sqrt{3}z}{2}}+\frac{3\pm i\sqrt{3}}{6}(a+b+c) $ and $ a^2+b^2+c^2-ab-bc-ca=0 $,
	\end{enumerate}
where $ \mathcal{C} $ is a non-zero constant.
\end{theoC}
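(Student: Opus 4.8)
The plan is to convert the set-sharing hypothesis into a functional equation, use normal families to force $f$ to have finite order, and then read off the three alternatives from the resulting algebraic differential equation.

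First I would set $P(w)=(w-a)(w-b)(w-c)$. Since $f$ (hence $f'$) is entire and $P(f)\not\equiv 0$, the hypothesis $E_f(\mathcal S)=E_{f'}(\mathcal S)$ says exactly that the entire functions $P(f)$ and $P(f')$ have the same zeros with the same multiplicities, so $P(f')/P(f)$ is a zero-free entire function and
\[
(f'-a)(f'-b)(f'-c)=e^{h}\,(f-a)(f-b)(f-c)\tag{$\ast$}
\]
for some entire $h$. From here everything is to be extracted from $(\ast)$ together with the fact that $f$ is entire and non-constant, in three stages: (i) $f$ has finite order; (ii) $h$ is constant; (iii) $f$ satisfies $f'=\lambda f+\mu$, after which the three cases are a finite check.

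The heart of the argument — and the place I expect the real work to be — is (i), and this is where the normal-families method of Chang--Fang--Zalcman enters. Suppose $f$ has infinite order. Then the family $\mathcal F=\{f(\,\cdot+w):w\in\mathbb C\}$ is not normal on the unit disc: if it were, Marty's criterion would give $\sup_{w}f^{\#}(w)<\infty$, and an entire function with bounded spherical derivative has finite order. So by Zalcman's lemma there exist $w_n\in\mathbb C$, $z_n\to z_0$, $\rho_n\downarrow 0$ with $g_n(\zeta):=f(w_n+z_n+\rho_n\zeta)\to g(\zeta)$ locally uniformly on $\mathbb C$, $g$ a non-constant entire function with $g^{\#}(\zeta)\le g^{\#}(0)=1$. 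Evaluating $(\ast)$ at $w_n+z_n+\rho_n\zeta$ and using $g_n'(\zeta)=\rho_n f'(w_n+z_n+\rho_n\zeta)$, then multiplying by $\rho_n^{3}$, gives
\[
(g_n')^{3}-\sigma_1\rho_n(g_n')^{2}+\sigma_2\rho_n^{2}g_n'-\sigma_3\rho_n^{3}=\rho_n^{3}\,e^{h(w_n+z_n+\rho_n\zeta)}\,P(g_n),
\]
where $\sigma_i$ are the elementary symmetric functions of $a,b,c$. The left side tends to $(g')^{3}$ locally uniformly, so $\rho_n^{3}e^{h(\cdots)}P(g_n)\to (g')^{3}$; the point is that the left-hand sequence is a sequence of \emph{entire} functions whose zeros are precisely those of $P(g_n)$, the exponential factor being zero-free. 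Now at any $\zeta_0$ with $g(\zeta_0)$ equal to one of $a,b,c$, say with multiplicity $p\ge 1$, Hurwitz's theorem applied to $P(g_n)\to P(g)$ shows $P(g_n)$ has $p$ zeros near $\zeta_0$, while Hurwitz applied to $\rho_n^{3}e^{h(\cdots)}P(g_n)\to (g')^{3}$ shows it has $3(p-1)$ zeros there; since these counts must agree, $p=3(p-1)$, which is impossible for a positive integer. Hence $P(g)$ has no zeros, i.e. the non-constant entire function $g$ omits the three distinct values $a,b,c$ — contradicting Picard's theorem. Therefore $f$ has finite order.

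For (ii): finite order makes $e^{h}$ of finite order, so $h$ is a polynomial, and a standard Nevanlinna-theoretic estimate forces it to be constant — rewriting $(\ast)$ as $(f'/f)^{3}=e^{h}\prod(1-a/f)\big/\prod(1-a/f')$ and using the lemma on the logarithmic derivative, a non-constant polynomial $h$ would make $f'/f$ grow essentially like $e^{h/3}$, and integrating would push $\log\log M(r,f)$ up to order $\deg h\ge 1$ growth, contradicting finite order. So $e^{h}=A$ with $A\neq 0$ and $(\ast)$ reads $P(f')=AP(f)$. In particular $f$ cannot be a polynomial (the degrees of the two sides would not match), so $f$ is transcendental. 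For (iii), view $z\mapsto(f(z),f'(z))$ as a non-constant holomorphic map of $\mathbb C$ into the affine cubic $X=\{P(y)=AP(x)\}$; its projective closure meets the line at infinity in the three distinct points $[1:A^{1/3}\zeta:0]$ ($\zeta^{3}=1$), which are smooth points of the curve. If the closure were irreducible it would be a smooth or rational cubic, so $X$ would be a genus-$\le 1$ curve with at least three punctures — hence hyperbolic — and the map would be constant, a contradiction. So the closure is reducible and $(f,f')$ lies on a line or conic component; solving the equation of that component for $f'$ (in the conic case the discriminant of the resulting quadratic in $f'$ must be a perfect square, because the entire function $f$ omits at most one value) gives $f'=\lambda f+\mu$ with $\lambda\neq 0$. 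Then $f=\mathcal C e^{\lambda z}+\kappa$ with $\mathcal C\neq 0$, and substituting into $P(f')=AP(f)$ forces $A=\lambda^{3}$ and the affine map $w\mapsto w/\lambda+\kappa$ to permute $\{a,b,c\}$. Running through $S_3$: the identity gives $\lambda=1$, $\kappa=0$, which is case (1); a transposition gives $\lambda=-1$, one of $2a-b-c=0,\ 2b-c-a=0,\ 2c-a-b=0$, and $\kappa=\tfrac23(a+b+c)$, which is case (2); a $3$-cycle gives $\lambda=\tfrac{-1\pm i\sqrt3}{2}$, the relation $a^{2}+b^{2}+c^{2}-ab-bc-ca=0$, and $\kappa=\tfrac{3\pm i\sqrt3}{6}(a+b+c)$, which is case (3). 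This exhausts all possibilities, so the proof is complete.
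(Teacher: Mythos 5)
Your reduction to the functional equation $(\ast)$, your rescaling argument in stage (i), and your algebraic endgame in stage (iii) are essentially sound; in fact (i) gives more than you state (the family $\{f(\cdot+w)\}$ is normal outright, so $f^{\#}$ is bounded and $f$ has order at most $1$ by Clunie--Hayman, which you will need), and (iii) --- ruling out an irreducible cubic $P(y)=AP(x)$ because its normalization minus the three smooth points at infinity is hyperbolic --- is a genuinely different and rather elegant alternative to the direct computation with exponentials carried out in the paper's Lemma \ref{lm8}.

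The genuine gap is stage (ii), the assertion that $h$ is constant. Your justification --- write $(f'/f)^{3}=e^{h}\prod(1-a/f)\big/\prod(1-a/f')$ and invoke the logarithmic derivative lemma --- does not go through: the quantities $m\left(r,\tfrac{f}{f-a}\right)$ and $m\left(r,\tfrac{1}{f'-a}\right)$ are \emph{not} $O(\log r)$ in general (each can be comparable to $T(r,f)$), so you cannot conclude $m(r,e^{h})=O(\log r)$, nor that $f'/f$ ``grows like $e^{h/3}$''. This is precisely the hardest step of the Chang--Fang--Zalcman proof and occupies the whole of Lemma \ref{lm7} here: after reducing to $h(z)=\mathcal{A}z+\mathcal{B}$, the case where one of $a,b,c$ vanishes is handled by partial fractions together with $m(r,f^{(k)}/f)=o(\log r)$ (the trick needs a factor of $f$ in the denominator, which is exactly why it fails when $abc\neq0$); the case $abc\neq 0$ requires Gundersen's pointwise estimates for the logarithmic derivative along rays, a growth estimate for $|f/f'|$ on rays where $f'$ is unbounded, a split into the half-planes $\cos\theta>0$ (where one shows $f\to a$, $b$ or $c$ along the ray) and $\cos\theta<0$ (where one shows $|f(re^{i\theta})|=O(r)$), and finally the Phragm\'en--Lindel\"of theorem to force $f$ to be a polynomial, a contradiction. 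Without some version of this analysis your proof is incomplete. A smaller loose end: in the conic sub-case of (iii), ``the discriminant must be a perfect square because $f$ omits at most one value'' is not sufficient, since $f$ could a priori take both roots of the discriminant with even multiplicity everywhere; that sub-case needs a separate argument.
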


\par We see from the next example that, conclusion of \textit{Theorem C} ceases to be hold if $ CM $ shared set $ \mathcal{S} $ be replaced by $ IM $ shared set.
\begin{exm}\label{ex1.2}\cite{Cha & Fan & al-ADM-2007}
	Let $ S=\{-1, 0, 1\} $ and $ f(z)=\sin z $ or $ \cos z $. then it is clear that $ f $ and $ f^{\prime} $ share the set $ S $ $ IM $ and $ f $ takes none of the forms $ (1)-(3) $ in Theorem C.
\end{exm}
\begin{rem}
	In Example 1.2, one may consider $ k $-th derivative of $ f $ instead of first derivative, when $ k $ is any odd positive integer to get the same.
\end{rem}
\par From the above discussions, one may note that a non-constant entire function and its first derivative when share a set of arbitrary three finite complex numbers $ a, b $ and $ c $ counting multiplicities, then it is possible to find out some specific forms of the function $ f $.\par 
\begin{rem}
	\par We have found a little gap in the statement of \emph{Theorem C}. This is because of the fact that the authors \textit{Chang-Fang-Zalcman} have been used \emph{Lemma 2.2} to prove their result \emph{Theorem C} for the  first derivative of a function $f$ i.e., for $k=1$. So one may noticed the following points. 
	\begin{enumerate}
		\item[(i).] In the statement of the \emph{Theorem C}, the author should mention the line from \emph{Lemma 2.2} as \emph{``let $ f $ be a non-constant entire function having zeros of multiplicities $\geq 1$"}.
		\item[(ii).] Since function $f$ must have zeros, so it is natural that the possible form of the function should not be of the form $ f(z)=\mathcal{C}e^{z}$ as it has no zeros at all.  
	\end{enumerate}
\end{rem}
So the natural question arises as follows:
\begin{ques}\label{q1.1}
	Is it possible to extend Theorem C for $ k $-th derivative of $ f $ ?
\end{ques}
\par If the answer of Question \ref{q1.1} is affirmative, then one may ask the following question: 
\begin{ques}
	What will be the possible forms of the non-constant entire function $ f $ ?
\end{ques}
\par Since $f$ and $f^{(k)}$ share the set $\mathcal{S}=\{a,b,c\}$, so one may observe that among all the possible relationship between $f$ and $f^{(k)}$, clearly $f^{(k)}\equiv f$ is the obvious one. So before going to state our main results, we want to discuss on a natural quarry  \textit{What is the general solution of $f^{(k)}\equiv f$ ?} The natural answer is $ f(z)=\mathcal{L}_{\theta}(z)$ (see \cite{Aha & CKMS & 2018,Ban & Aha & RCDCMP & 2018}) where we defined $\mathcal{L}_{\theta}(z)$ as follows \bea\label{e1.111} \mathcal{L}_{\theta}(z)=c_0e^z+c_1e^{\theta z}+c_2e^{\theta^2 z }+\ldots+c_{k-1}e^{\theta^{k-1}z},\eea $c_i\in\mathbb{C}$ for $i\in\{0,1,2,\ldots,k-1\}$ with $c_{k-1}\neq 0$ and $\theta=\displaystyle\cos\left(\frac{2\pi}{k}\right)+i\sin\left(\frac{2\pi}{k}\right)$.\par 
\par Answering all the questions mentioned above is the main motivation of writing this paper. We have tried to take care of the points we have mentioned in \emph{Remark 1.2}. Following is the main result of this paper.  
\begin{theo}\label{th1}
		Let $ f $ be a non-constant entire function, having zeros of multiplicity $ \geq k $ and let $ S=\{a, b, c\} $, where $ a, b $ and $ c $ are distinct complex numbers. If $ E_f(S)=E_{f^{(k)}}(S) $, then $ f $ takes one of the following forms:
	\begin{enumerate}
		\item[(1)] $ f(z)=\mathcal{L}_{\theta}(\beta z) ,$ where $ \beta $ is a root of the equation $ z^k-1=0, $ 
		\item[(2)] $ f(z)=\mathcal{L}_{\theta}(\eta z)+\frac{2}{3}(a+b+c)$, where $ \eta $ is a root of the equation $ z^k+1=0 $ and $ (2a-b-c)(2b-c-a)(2c-a-b)=0, $
		\item[(3)] $ f(z)=\mathcal{L}_{\theta}(\zeta z)+\frac{3\pm i\sqrt{3}}{6}(a+b+c) $, where $ \zeta (\neq 1) $ is a root of the equation $ z^{3k}-1=0 $ and $ a^2+b^2+c^2-ab-bc-ca=0 $,
	\end{enumerate}
	where $ \mathcal{L}_{\theta}(z) $ is defined in \emph{(\ref{e1.111})}.
\end{theo}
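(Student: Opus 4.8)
The plan is to turn the set-sharing hypothesis into a single algebraic identity relating $f$ and $f^{(k)}$, to force $\operatorname{ord}(f)\le1$ by a normality argument, and then to read that identity geometrically. Write $P(w)=(w-a)(w-b)(w-c)=w^{3}-\sigma_{1}w^{2}+\sigma_{2}w-\sigma_{3}$ with $\sigma_{1}=a+b+c$, $\sigma_{2}=ab+bc+ca$, $\sigma_{3}=abc$. Since $f$ is entire, $P(f)$ and $P\bigl(f^{(k)}\bigr)$ are entire, and $E_{f}(S)=E_{f^{(k)}}(S)$ says precisely that they have the same zeros with the same multiplicities; hence $P\bigl(f^{(k)}\bigr)/P(f)$ is a zero-free entire function and there is an entire $\alpha$ with
\be\label{ppeq1} P\bigl(f^{(k)}\bigr)=e^{\alpha}\,P(f). \ee
To control the growth of $f$ I would apply the normality criterion behind \emph{Theorem C} --- the analogue for $f^{(k)}$ of the lemma used by Chang--Fang--Zalcman, whose assumption ``all zeros of multiplicity $\ge k$'' is exactly our standing hypothesis --- to the family $\mathcal F=\{f(z+w):w\in\mathbb C\}$: each member has zeros of multiplicity $\ge k$ and shares $S$ CM with its $k$-th derivative, so $\mathcal F$ is normal on $\mathbb C$; Marty's criterion at $z=0$ then gives $\sup_{\mathbb C}f^{\#}<\infty$, whence $\operatorname{ord}(f)\le1$ by the Clunie--Hayman theorem. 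Consequently $e^{\alpha}=P(f^{(k)})/P(f)$ has order $\le1$, so $\alpha(z)=\lambda z+\mu$ and \eqref{ppeq1} reads $P\bigl(f^{(k)}\bigr)=Be^{\lambda z}P(f)$ with $B=e^{\mu}\ne0$.

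The crux --- and, I expect, the main obstacle --- is to prove that $\lambda=0$. From \eqref{ppeq1} the reduced counting functions of the $S$-points of $f$ and of $f^{(k)}$ coincide, $\bar N\bigl(r,1/P(f)\bigr)=\bar N\bigl(r,1/P(f^{(k)})\bigr)$; applying Nevanlinna's second main theorem to $f$ and to $f^{(k)}$, together with $\operatorname{ord}(f)\le1$, strongly constrains $T(r,f)$ and $T(r,f^{(k)})$, and a Borel-type analysis of the exponential terms after writing $f$ in Hadamard form --- treating separately the cases in which $f$ has finitely and infinitely many zeros --- should exclude $\lambda\ne0$ (already for $k=1$ this is the delicate reduction implicit in \emph{Theorem C}). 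This brings matters to
\be\label{ppeq2} P\bigl(f^{(k)}\bigr)=B\,P(f),\qquad B\in\mathbb C\setminus\{0\}. \ee

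Now I interpret \eqref{ppeq2} geometrically: $z\mapsto\bigl(f(z),f^{(k)}(z)\bigr)$ is an entire curve on the affine plane cubic $\Gamma_{B}=\{P(y)=BP(x)\}$, and, the ring of entire functions having no zero-divisors, $f$ lies entirely on one irreducible component of $\Gamma_{B}$. The degree-$3$ part of $P(y)-BP(x)$ being $y^{3}-Bx^{3}=\prod_{j=0}^{2}(y-B^{1/3}\omega^{j}x)$ with $\omega=e^{2\pi i/3}$, the projective closure of $\Gamma_{B}$ meets the line at infinity in three distinct points, at each of which it is smooth. If $\Gamma_{B}$ were irreducible, its normalization $C$ would have genus $0$ or $1$, and the curve would lift to an entire map $\mathbb C\to C$ omitting the three (respectively, at least one) preimages of those points, hence constant by Picard's theorem (respectively, by the hyperbolicity of a punctured genus-one surface); then $f$ would be constant, a contradiction. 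So $\Gamma_{B}$ is reducible, i.e. a union of a line and a conic, or of three lines. If $f$ lies on a line component $y=px+q$ ($p^{3}=B$), then $f^{(k)}=pf+q$, and substituting into \eqref{ppeq2}, since the non-constant $f$ omits at most finitely many values, the affine map $T(w)=(w-q)/p$ must permute $S=\{a,b,c\}$. If instead $f$ lies on an irreducible conic component (quadratic in $y$, with leading part $y^{2}+pxy+p^{2}x^{2}$), then $f^{(k)}=\tfrac12\bigl(-(pf+\mathrm{const})\pm\sqrt{D(f)}\bigr)$ for some quadratic polynomial $D$, so $\sqrt{D(f)}$ is entire; when $D$ has simple roots, $\operatorname{ord}(f)\le1$ forces $f$ into the shape $c_{0}+c_{1}e^{\nu z}+c_{2}e^{-\nu z}$ (solving $f-r_{i}=(\text{entire})^{2}$), and then ``$f^{(k)}$ is the $k$-th derivative of $f$'' and ``$f^{(k)}$ is the conic branch'' cannot hold simultaneously (the constant terms and the exponents conflict); hence $D$ has a double root and again $f^{(k)}=pf+q$ is affine in $f$, with $T(w)=(w-q)/p$ permuting $S$.

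It remains to enumerate the affine maps permuting a three-point set --- the identity, a transposition, or a $3$-cycle --- and to solve $f^{(k)}=pf+q$. The identity forces $p=1$, $q=0$, so $f^{(k)}=f$. A transposition is an affine involution $T(w)=-w+(v_{1}+v_{2})$ fixing one element $v_{0}$ and swapping the other two; it permutes $S$ exactly when $v_{1}+v_{2}=2v_{0}$ for some labelling, i.e. $(2a-b-c)(2b-c-a)(2c-a-b)=0$, and then $p=-1$, $q=\tfrac23\sigma_{1}$. A $3$-cycle is an affine map of order $3$, $T(w)=\omega^{\mp1}(w-\tfrac13\sigma_{1})+\tfrac13\sigma_{1}$, which permutes $S$ exactly when $a,b,c$ are vertices of an equilateral triangle, i.e. $a^{2}+b^{2}+c^{2}-ab-bc-ca=0$, and then $p\in\{\omega,\omega^{2}\}$ (so $B=1$) with the matching $q$. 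Finally, $f^{(k)}=pf+q$ has constant coefficients and $p\ne0$: its homogeneous solutions are spanned by $e^{\rho\theta^{j}z}$, $j=0,\dots,k-1$, where $\rho$ is a fixed $k$-th root of $p$ and $\theta=\cos(2\pi/k)+i\sin(2\pi/k)$, so the general homogeneous solution is $\mathcal L_{\theta}(\rho z)$, and $-q/p$ is a particular solution. Taking $p=1$ (so $\rho=\beta$, $\beta^{k}=1$, $q=0$) yields form $(1)$; $p=-1$ (so $\rho=\eta$, $\eta^{k}=-1$, $-q/p=\tfrac23(a+b+c)$) yields form $(2)$; and $p\in\{\omega,\omega^{2}\}$ (so $\rho=\zeta$, $\zeta^{k}=\omega^{\pm1}$, hence $\zeta^{3k}=1$ and $\zeta\ne1$, with $-q/p=\tfrac{3\pm i\sqrt3}{6}(a+b+c)$) yields form $(3)$. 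The genuinely hard point of this plan is the reduction to $\lambda=0$, with the exclusion of the conic alternative the other place that needs care.
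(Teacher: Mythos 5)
Your opening two steps coincide with the paper's: CM sharing of $S$ gives $P\bigl(f^{(k)}\bigr)=e^{\alpha}P(f)$, and the Fang--Zalcman normality criterion (Lemma \ref{lm2}, which is exactly where the hypothesis ``zeros of multiplicity $\geq k$'' enters) combined with Marty's criterion and the Clunie--Hayman theorem gives $\mathrm{ord}(f)\leq 1$, hence $\alpha(z)=\lambda z+\mu$. The genuine gap is the step you yourself flag as the crux: you assert that the second main theorem plus ``a Borel-type analysis'' \emph{should} exclude $\lambda\neq 0$, but you supply no mechanism, and the one you gesture at is not the one that actually works here. This reduction is the entire content of the paper's Lemma \ref{lm7}, and it proceeds quite differently: one first rules out $abc=0$ using the logarithmic-derivative estimate of Lemma \ref{lm6} (so that the remainder term $\mathcal{C}_3$ in the partial-fraction rewriting is nonzero), then rescales $g(z)=f(z/\lambda)$ so the right-hand side becomes $\mathcal{C}e^{z}$, applies Gundersen's pointwise estimate (Lemma \ref{lm3}) along rays outside an exceptional set of directions, shows $g(re^{i\theta})\to a,b$ or $c$ when $\cos\theta>0$ and $g(re^{i\theta})=O(r^{k})$ when $\cos\theta<0$ (via the antiderivative bound of Lemma \ref{lm4}), and concludes by Phragm\'en--Lindel\"of and Liouville that $g$ is a polynomial, a contradiction. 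Without some version of this argument your proof stops at $P\bigl(f^{(k)}\bigr)=Be^{\lambda z}P(f)$.

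Granting $\lambda=0$, your endgame is genuinely different from the paper's, and in one respect more honest: the paper's Lemma \ref{lm8} simply \emph{asserts} the ansatz $f^{(k)}(z)=\alpha^{k}\mathcal{L}_{\theta}(\alpha z)$ (its equation (\ref{e2.14})), which already builds in the affine relation between $f^{(k)}$ and $f$, and then compares coefficients; your route through the plane cubic $\{P(y)=BP(x)\}$, Picard's theorem in the irreducible case, and the classification of affine permutations of a three-point set actually \emph{derives} the relation $f^{(k)}=pf+q$, and your final ODE analysis reproduces the three listed forms correctly (including the constants $-q/p$). But here too a step is unfinished: the irreducible conic component is isomorphic to $\mathbb{C}^{*}$, so Picard does not eliminate it, and your parenthetical ``the constant terms and the exponents conflict'' is not a proof. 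You must actually write $f=c_{0}+c_{1}e^{\nu z}+c_{2}e^{-\nu z}$ with $c_{1}c_{2}\neq 0$, substitute $f$ and $f^{(k)}=c_{1}\nu^{k}e^{\nu z}+c_{2}(-\nu)^{k}e^{-\nu z}$ into the conic's equation, and examine the coefficients of $e^{\pm 2\nu z}$; note that for even $k$ these two conditions coincide, and the correct conclusion in that subcase is not an immediate contradiction but that $f^{(k)}$ is again affine in $f$, so that the image lies on a line meeting the irreducible conic in infinitely many points, which is the actual contradiction. So: right skeleton, an attractive and arguably cleaner second half, but the theorem is not proved as written because the key reduction to constant $\alpha$ is missing and the conic case is only sketched.
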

%------------------------------------------------------------------------------------------------------------------------------------------------%

\section{Some Lemmas} We begin our investigation with the following lemmas, which are essential to
prove our main results.
\begin{lem}\cite{Clu & Hay-CMH-1966}\label{lm1}
	The order of an entire function having bounded  spherical derivative on $ \mathcal{C} $ is at most $ 1 $.
\end{lem}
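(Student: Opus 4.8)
\smallskip
\noindent\textbf{Proof proposal.} Let $f$ be a non-constant entire function with $f^{\#}\le M$ on $\mathbb C$; the goal is $\rho(f)\le 1$, equivalently $\log M(r,f)=O(r)$ where $M(r,f)=\max_{|z|=r}|f(z)|$. After the substitution $z\mapsto z/M$, which leaves the order unchanged while multiplying the spherical derivative by $1/M$, I may assume $f^{\#}\le 1$.

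The first ingredient is a local modulus bound. Along any line segment issuing from a point $z_0$, the modulus $u=|f|$ satisfies $|u'|\le|f'|\le 1+|f|^{2}$, so $\arctan|f|$ is $1$-Lipschitz along the segment; hence if $|f(z_0)|\le 1$ then $|f|\le\tan\!\bigl(\tfrac{\pi}{4}+t\bigr)$ on the disc $|z-z_0|<t$ for $t<\tfrac{\pi}{4}$, and in particular $|f|<3$ on $D(z_0,\tfrac{\pi}{8})$. Since the spherical derivative is invariant under $w\mapsto 1/w$, the same estimate applies to $1/f$; consequently, if $|f(z_0)|>3$ then $|f|>1$ (so $f\neq0$) throughout $D(z_0,\tfrac{\pi}{8})$.

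Now fix a large $R\ge 3$ and put $\Omega_R=\{z:|f(z)|>R\}$. By the preceding remark $f$ is zero-free on $\Omega_R$, so $v:=\log(|f|/R)$ is a positive harmonic function on $\Omega_R$ which vanishes on $\partial\Omega_R$ (where $|f|=R$); also $\Omega_R\neq\mathbb C$, for otherwise $1/f$ would be a bounded non-constant entire function. If $\{|z|=r\}\subset\Omega_R$ for all large $r$, then $\{|f|\le R\}$ is bounded, whence $1/f$ has finitely many poles and is bounded near $\infty$, so $1/f$ is rational and $f$ is a polynomial, giving $\rho(f)=0$. Otherwise the complement of $\Omega_R$ is unbounded, and the decisive step is to use $f^{\#}\le1$ to control the \emph{shape} of $\Omega_R$: boundedness of the spherical derivative prevents the level curves $\{|f|=R\}$ from turning too fast, so each relevant component of $\Omega_R$ is, asymptotically, no thinner than a half-plane (a genuinely thinner channel — a half-strip or a cusp — would force $f$ to oscillate so quickly across the channel that $f^{\#}$ would blow up on its walls). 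Feeding this geometric information into a Phragm\'en--Lindel\"of / Ahlfors distortion estimate for the positive harmonic function $v$ yields $v(z)=O(|z|)$ on $\Omega_R$; since $|f|\le R$ off $\Omega_R$, we get $\log M(r,f)=O(r)$ and hence $\rho(f)\le 1$.

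The step I expect to be the genuine obstacle is this quantitative geometric control of $\Omega_R=\{|f|>R\}$ — converting ``$f^{\#}$ bounded'' into a statement about the angular width of $\Omega_R$ at each radius that is sharp enough for the Phragm\'en--Lindel\"of estimate to produce exponent exactly $1$. Everything is borderline: the conclusion is best possible, as $f(z)=e^{z}$ shows, and for that example $\Omega_R$ is precisely a half-plane.
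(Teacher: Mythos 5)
The paper offers no proof of this lemma --- it is quoted verbatim from Clunie--Hayman \cite{Clu & Hay-CMH-1966} --- so your attempt can only be judged on its own merits, and on those merits it is incomplete. Your preliminary reductions are fine: the rescaling to $f^{\#}\le 1$, the $1$-Lipschitz bound on $\arctan|f|$ along segments, the observation that $v=\log(|f|/R)$ is positive harmonic on $\Omega_R=\{|f|>R\}$ and vanishes on its boundary, and the disposal of the degenerate cases. But the entire content of the Clunie--Hayman theorem sits in the one step you leave as a heuristic: converting ``$f^{\#}$ bounded'' into the assertion that each unbounded component of $\Omega_R$ has, in a quantitative sense, angular opening at least $\pi$, and then running a Phragm\'en--Lindel\"of or Ahlfors-distortion argument to conclude $v(z)=O(|z|)$. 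You acknowledge this yourself (``the step I expect to be the genuine obstacle''), which is honest but does not make it a proof. Note that the easy estimate $|f'|\le M(1+|f|^2)$, fed into the Ahlfors--Shimizu characteristic, gives order at most $2$ with no effort; the whole theorem is precisely the improvement from $2$ to $1$ for entire functions, and that improvement is exactly the step you have not carried out.

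There is also a logical difficulty in the way you propose to close the argument even granting the geometric claim. Phragm\'en--Lindel\"of in a region of opening $\ge\pi$ does not by itself yield an upper bound $v=O(|z|)$: for a general (non-sectorial) component you would need either a representation of positive harmonic functions vanishing on the boundary via a conformal map onto a half-plane, together with a growth estimate $|\varphi(z)|=O(|z|)$ for that map (which is again the Ahlfors distortion inequality, requiring the quantitative control of the angular width $\theta(t)$ of the component on each circle $|z|=t$), or an a priori growth hypothesis on $v$ that is strictly better than the order-$2$ bound you actually have at that stage. Your sketch of why a thin channel forces $f^{\#}$ to blow up on the level set $\{|f|=R\}$ (via the boundary gradient of $v$ in a strip) is the right moral picture, but it is stated only for model domains and is not turned into an estimate valid for the actual components of $\Omega_R$. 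As it stands, the argument is a plausible outline of the Clunie--Hayman strategy with its central lemma missing.
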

\begin{lem}\cite{Fan & al-JMMA-2003}\label{lm2}
	Let $ \mathcal{F} $ be a family of holomorphic functions in a domain $ D $. Let $ k $ be a positive integer. Let $ a, b $ and $ c $ be three distinct finite complex numbers and $ M $ a positive number. If, for any $ f\in\mathcal{F} $, the zeros of $ f $ are of multiplicity $ \geq k $ and $ |f^{(k)}(z)|\leq M $ whenever $ f(z)\in\{a, b, c\} $, then $ \mathcal{F} $ is normal in $ D $.
\end{lem}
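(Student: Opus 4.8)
The plan is to argue by contradiction through Zalcman's well-known rescaling lemma, converting the family-level hypotheses into structural information about a single entire limit function. Suppose $\mathcal{F}$ fails to be normal at some point $z_0\in D$. Then I would extract functions $f_n\in\mathcal{F}$, points $z_n\to z_0$ and scaling factors $\rho_n\to 0^{+}$ so that the rescaled functions $g_n(\zeta)=f_n(z_n+\rho_n\zeta)$ converge locally uniformly, in the spherical metric on $\mathbb{C}$, to a non-constant meromorphic function $g$ with $g^{\#}(\zeta)\le g^{\#}(0)=1$. Since $\mathcal{F}$ consists of holomorphic functions, the limit $g$ is in fact entire, and because its spherical derivative is bounded, Lemma \ref{lm1} guarantees that $g$ has order at most $1$. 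The success of the whole argument hinges on reading off the right properties of this single function $g$.

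Next I would record two properties of $g$. Differentiating the rescaling $k$ times gives $g_n^{(k)}(\zeta)=\rho_n^{k}f_n^{(k)}(z_n+\rho_n\zeta)$, and $g_n^{(k)}\to g^{(k)}$ locally uniformly. If $g(\zeta_0)\in\{a,b,c\}$, then since $g$ is non-constant, Hurwitz's theorem produces points $\zeta_n\to\zeta_0$ with $g_n(\zeta_n)\in\{a,b,c\}$; the hypothesis forces $|f_n^{(k)}(z_n+\rho_n\zeta_n)|\le M$, whence $|g_n^{(k)}(\zeta_n)|\le \rho_n^{k}M\to 0$, and passing to the limit yields $g^{(k)}(\zeta_0)=0$. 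Thus $g^{(k)}$ vanishes at every point of $g^{-1}(\{a,b,c\})$. A second Hurwitz argument, applied to the zeros and using that every zero of each $f_n$ has multiplicity at least $k$, shows that every zero of $g$ has multiplicity at least $k$.

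With $g$ in hand I would derive a contradiction, treating the transcendental case first. The distinct $a$-, $b$- and $c$-points of $g$ are pairwise distinct zeros of $g^{(k)}$, so $\overline{N}(r,1/(g-a))+\overline{N}(r,1/(g-b))+\overline{N}(r,1/(g-c))\le\overline{N}(r,1/g^{(k)})$. Since $g$ is entire, the logarithmic derivative lemma gives $T(r,g^{(k)})\le T(r,g)+S(r,g)$, so $\overline{N}(r,1/g^{(k)})\le N(r,1/g^{(k)})\le T(r,g)+S(r,g)$. Feeding these bounds into the Second Fundamental Theorem applied to the four targets $a,b,c,\infty$, and using $\overline{N}(r,g)=0$ because $g$ is entire, yields $2T(r,g)\le T(r,g)+S(r,g)$, that is $T(r,g)\le S(r,g)$, which is absurd for a non-constant entire function of finite order.

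It remains to handle the case where $g$ is a polynomial, and here the multiplicity hypothesis becomes essential; I expect this degenerate case to be the main obstacle, since the Nevanlinna estimate above collapses for polynomials and a separate counting argument is required. If $\deg g=d$ with $1\le d<k$, then $g$ has a zero, by the fundamental theorem of algebra, of multiplicity at most $d<k$, contradicting the multiplicity property just established; hence $d\ge k$. Then $g^{(k)}$ is a polynomial of degree $d-k$, so $g^{-1}(\{a,b,c\})$, being contained in its zero set, consists of at most $d-k$ points. On the other hand, the ramification identity $\sum_{\zeta}(\mathrm{mult}_\zeta(g)-1)=d-1$ forces the number of distinct points in $g^{-1}(\{a,b,c\})$ to be at least $3d-(d-1)=2d+1$. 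Comparing the two counts gives $2d+1\le d-k$, which is impossible. In either case we reach a contradiction, so $\mathcal{F}$ must be normal in $D$.
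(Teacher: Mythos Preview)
Your argument is correct. The paper itself does not supply a proof of this lemma; it simply quotes the result from Fang--Zalcman \cite{Fan & al-JMMA-2003}. What you have written is essentially the standard Zalcman-rescaling proof that one finds in that source: assume non-normality, extract a rescaled limit $g$, use Hurwitz together with the bound $|f^{(k)}|\le M$ on the shared level set to force $g^{(k)}$ to vanish at every $a$-, $b$-, $c$-point of $g$, and then reach a contradiction via the Second Fundamental Theorem in the transcendental case and a ramification count in the polynomial case. Both halves of your contradiction are sound; in particular, your polynomial step is clean---the inequality $2d+1\le d-k$ (and the degenerate case $d=k$, where $g^{(k)}$ is a nonzero constant yet $g$ must take each of $a,b,c$) dispatches that case without further tools.

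Two minor remarks that do not affect correctness. First, you invoke Lemma~\ref{lm1} to say $g$ has order at most $1$, but you never actually use finite order beyond the fact that $S(r,g)=o(T(r,g))$ on a set of full measure, which already holds for any transcendental meromorphic $g$; so that appeal is harmless but unnecessary. Second, in the Hurwitz step you should (and implicitly do) treat $a,b,c$ one value at a time, since Hurwitz applies to $g_n-a\to g-a$ rather than to the set-valued preimage; your write-up is fine once read that way.
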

\begin{lem}\cite{Gun-JLMS-1998}\label{lm3}
	Let $ f $ be a  non-constant meromorphic function of finite order $ \rho $, and $ \epsilon >0 $ a constant. Then there exists a set $ E\subset [0,2\pi) $ which has linear measure zero, such that if $ \psi_0\in[0,2\pi) - E$, then there is a constant $ R_0=R_0(\psi_0) >0$ such that for all $ z $ satisfying $ arg z=\psi_0 $ and $ |z|>R_0 $, we have \beas  \bigg|\frac{f^{(k)}(z)}{f(z)}\bigg|\leq |z|^{k(\rho+\epsilon-1)}.  \eeas
\end{lem}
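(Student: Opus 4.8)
The plan is to derive the estimate from the Poisson–Jensen formula, first in the case $k=1$, and then to bootstrap to general $k$ through the multiplicative decomposition
\beas \frac{f^{(k)}}{f}=\frac{f^{(k)}}{f^{(k-1)}}\cdot\frac{f^{(k-1)}}{f^{(k-2)}}\cdots\frac{f'}{f}. \eeas
Since every derivative $f^{(j)}$ of a meromorphic function of finite order again has order $\rho$, each factor $f^{(j)}/f^{(j-1)}$ is the logarithmic derivative of an order-$\rho$ function, so the $k=1$ estimate applies to it. Applying that estimate with the same $\epsilon$ to each of the $k$ factors and taking the union of the $k$ exceptional direction-sets — still of linear measure zero — the $k$ exponents $\rho-1+\epsilon$ add to exactly $k(\rho-1+\epsilon)$, which is the assertion. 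Thus the whole problem reduces to the first logarithmic derivative.

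For $k=1$, I would fix $r=|z|$, set $R=2r$, and write the zeros and poles of $f$ inside $|w|<R$ as $a_\mu$ and $b_\nu$. Differentiating the Poisson–Jensen representation of $\log f$ gives
\beas \frac{f'(z)}{f(z)}=\frac{1}{2\pi}\int_0^{2\pi}\log|f(Re^{i\theta})|\,\frac{2Re^{i\theta}}{(Re^{i\theta}-z)^2}\,d\theta+\sum_{|a_\mu|<R}\Big(\frac{1}{z-a_\mu}-\frac{\ol{a_\mu}}{R^2-\ol{a_\mu}z}\Big)-\sum_{|b_\nu|<R}\Big(\frac{1}{z-b_\nu}-\frac{\ol{b_\nu}}{R^2-\ol{b_\nu}z}\Big). \eeas
The integral term is at most $\tfrac{2R}{(R-r)^2}\cdot\tfrac{1}{2\pi}\int_0^{2\pi}\big|\log|f(Re^{i\theta})|\big|\,d\theta$, and the last integral equals $m(R,f)+m(R,1/f)\le 2\,T(R,f)+O(1)$ by Jensen's formula; as $f$ has order $\rho$ one has $T(2r,f)\le r^{\rho+\epsilon}$ for all large $r$, so with $R=2r$ this term is $O(r^{\rho-1+\epsilon})$. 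Likewise each reflected summand obeys $|\ol{a_\mu}|/|R^2-\ol{a_\mu}z|\le 1/(R-r)$, and there are $n(R):=n(R,f)+n(R,1/f)\le R^{\rho+\epsilon}$ of them, so these contribute $O(r^{\rho-1+\epsilon})$ as well. Everything is therefore under control except the genuinely singular sums $\sum_{|a_\mu|<R}1/|z-a_\mu|$ and $\sum_{|b_\nu|<R}1/|z-b_\nu|$.

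Bounding these singular sums along rays is the crux, and it is the only place where the measure-zero exceptional set of directions is needed. Zeros and poles with $|a_\mu|\le r/2$ or $|a_\mu|\ge 3r/2$ satisfy $|z-a_\mu|\ge r/2$, hence contribute at most $2n(R)/r=O(r^{\rho-1+\epsilon})$. For the remaining points, whose modulus is comparable to $r$, I would work with the direction-average
\beas \int_0^{2\pi}\frac{d\psi}{|re^{i\psi}-a_\mu|}=\int_0^{2\pi}\frac{d\psi}{\sqrt{r^2+|a_\mu|^2-2r|a_\mu|\cos(\psi-\arg a_\mu)}}=O\!\Big(\frac{1}{r}\log\frac{4r}{|\,r-|a_\mu|\,|}\Big), \eeas
the logarithm arising from the integrable near-diagonal singularity of this elliptic-type integral. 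Summing over the $O(r^{\rho+\epsilon})$ relevant points shows that the direction-average of the singular sum is $O(r^{\rho-1+\epsilon}\log r)$. A Chebyshev estimate then bounds, for each dyadic radius $r=2^{m}$, the angular measure of directions on which the singular sum exceeds $r^{\rho-1+2\epsilon}$ by $O(r^{-\epsilon})$; since $\sum_m 2^{-m\epsilon}<\infty$, the Borel–Cantelli lemma produces a set $E\subset[0,2\pi)$ of linear measure zero off which the singular sum is $O(r^{\rho-1+2\epsilon})$ for all large $r$ (interpolating between consecutive dyadic radii using monotonicity of $n$). After relabelling $2\epsilon$ as $\epsilon$ and combining with the three controlled terms, this gives $|f'(z)/f(z)|\le|z|^{\rho-1+\epsilon}$ for $\arg z=\psi_0\notin E$ and $|z|>R_0(\psi_0)$, which is the case $k=1$.

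The main obstacle is precisely this singular sum: a single zero or pole whose argument is nearly $\psi_0$ can make $1/|z-a_\mu|$ enormous, so the pointwise bound genuinely fails on a small set of directions at each scale and can be recovered only after discarding them. The averaging-plus-Borel–Cantelli device is what converts the unavoidable failure on each scale into one exceptional set of measure zero; an equivalent route is to surround each $a_\mu$ and $b_\nu$ by a disc of radius $|a_\mu|^{-\rho-\epsilon}$, observe that the total angular aperture of the discs lying beyond radius $N$ tends to $0$ (so the rays meeting infinitely many of them form a null set), and then sort the remaining distances by size before summing. Either way, once the case $k=1$ is in hand, the reduction described in the first paragraph completes the proof.
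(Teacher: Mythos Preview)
The paper does not supply its own proof of this lemma: it is quoted, with citation, from Gundersen \cite{Gun-JLMS-1998} and used as a black box in the proof of Lemma~\ref{lm7}. There is therefore nothing in the paper to compare your argument against.

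For what it is worth, your sketch is essentially Gundersen's own argument. The reduction from general $k$ to $k=1$ via the telescoping product $f^{(k)}/f=\prod_{j=1}^{k} f^{(j)}/f^{(j-1)}$ is exactly his, as is the differentiated Poisson--Jensen decomposition into the boundary integral, the reflected terms, and the singular sum $\sum 1/|z-a_\mu|$. The one soft spot in your write-up is the direction-averaging step: the bound $\int_0^{2\pi}|re^{i\psi}-a_\mu|^{-1}\,d\psi=O\big(r^{-1}\log(4r/|r-|a_\mu||)\big)$ is fine for a single $a_\mu$, but summing these logarithms over all $a_\mu$ with modulus near $r$ is not automatically $O(n(R)\log r)$ when many $|a_\mu|$ cluster at $r$, and the singular sum is not monotone in $r$, so the dyadic interpolation needs more said. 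Gundersen in fact uses the second route you describe at the end --- excise a disc of radius roughly $|a_\mu|^{-(\rho+\epsilon)}$ about each zero and pole, observe that the total angular aperture of the discs beyond radius $N$ is $O\big(\sum_{|a_\mu|>N}|a_\mu|^{-1-\epsilon}\big)\to 0$, and control the singular sum on the remaining rays by ordering the distances --- and that version is clean. Since you already flag it as an equivalent alternative, your proposal is sound in substance.
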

\begin{lem}\label{lm4}
	Let $ f $ be an entire function, and suppose that $ |f^{(k)}(z)| $ is unbounded on some ray $ \arg z=\theta $. Then there exists an infinite sequence of points $ z_n=r_ne^{\theta} $ where $ r_n\rightarrow\infty $, such that $ f^{(k)}(z_n)\rightarrow\infty $ and \bea\label{e2.1} \bigg|\frac{f(z_n)}{f^{(k)}(z_n)}\bigg|\leq (1+o(1))|z_n|^k  \eea as $ z_n\rightarrow\infty. $
\end{lem}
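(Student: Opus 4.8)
The plan is to select the points $z_n$ on the ray $\arg z=\theta$ so that $|f^{(k)}(z_n)|$ is a running maximum of $|f^{(k)}|$ along that ray; this domination property is exactly what makes the Taylor remainder of $f$, taken with base point $0$, controllable.

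First I would put $\phi(t)=|f^{(k)}(te^{i\theta})|$ for $t\ge 0$ and $\psi(r)=\max_{0\le t\le r}\phi(t)$. The hypothesis that $|f^{(k)}|$ is unbounded on the ray says precisely that $\psi$ is nondecreasing with $\psi(r)\to\infty$ as $r\to\infty$. For each positive integer $n$ I would pick a point $r_n\in[0,n]$ at which the continuous function $\phi$ attains its maximum over $[0,n]$, and set $z_n:=r_ne^{i\theta}$. Then $|f^{(k)}(z_n)|=\phi(r_n)=\psi(n)\to\infty$, so $f^{(k)}(z_n)\to\infty$; since $f^{(k)}$ is bounded on every compact set, $r_n$ cannot have a bounded subsequence, hence $r_n\to\infty$; and, because $[0,r_n]\subset[0,n]$,
\[
|f^{(k)}(te^{i\theta})|=\phi(t)\le\phi(r_n)=|f^{(k)}(z_n)|\qquad(0\le t\le r_n).
\]

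Next I would apply Taylor's formula with integral remainder based at the origin,
\[
f(z)=\sum_{j=0}^{k-1}\frac{f^{(j)}(0)}{j!}\,z^j+\frac{1}{(k-1)!}\int_0^z(z-\zeta)^{k-1}f^{(k)}(\zeta)\,d\zeta ,
\]
the integral being taken along the segment from $0$ to $z$. Evaluating at $z=z_n$, parametrizing $\zeta=te^{i\theta}$ with $0\le t\le r_n$, invoking the displayed domination inequality, and using $\int_0^{r_n}(r_n-t)^{k-1}\,dt=r_n^k/k$, the remainder is bounded in modulus by $\tfrac{r_n^k}{k!}|f^{(k)}(z_n)|$, while the polynomial part has modulus $\le\sum_{j=0}^{k-1}\tfrac{|f^{(j)}(0)|}{j!}r_n^{j}=O(r_n^{k-1})$ as $n\to\infty$. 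Hence $|f(z_n)|\le\tfrac{r_n^k}{k!}|f^{(k)}(z_n)|+O(r_n^{k-1})$. Dividing by $|f^{(k)}(z_n)|$ — positive for all large $n$ since $|f^{(k)}(z_n)|\to\infty$ — and using $|z_n|=r_n\to\infty$ together with $1/k!\le 1$, one gets
\[
\left|\frac{f(z_n)}{f^{(k)}(z_n)}\right|\le\frac{r_n^k}{k!}+\frac{O(r_n^{k-1})}{|f^{(k)}(z_n)|}=r_n^k\Bigl(\tfrac{1}{k!}+o(1)\Bigr)\le(1+o(1))\,|z_n|^k ,
\]
which is the asserted estimate (\ref{e2.1}).

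I do not anticipate a real obstacle here; the one point requiring care is arranging, all at once, that $r_n\to\infty$, that $|f^{(k)}(z_n)|\to\infty$, and that $z_n$ dominates $|f^{(k)}|$ on the entire sub-segment $[0,z_n]$ — without the last property the remainder integral is uncontrollable, and the running-maximum construction is precisely what secures the three simultaneously. An alternative that avoids Taylor's formula is to iterate the trivial case $k=1$ — where $f(z_n)-f(0)=\int_0^{z_n}f'(\zeta)\,d\zeta$ already gives $|f(z_n)|\le|f(0)|+r_n\max_{[0,z_n]}|f'|$ — by integrating the bound on $|f^{(k)}|$ upward through $|f^{(k-1)}|,\dots,|f'|,|f|$; this yields the same leading term $r_n^k|f^{(k)}(z_n)|/k!$ and the same $O(r_n^{k-1})$ error, at the cost of somewhat heavier bookkeeping.
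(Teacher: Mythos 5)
Your proposal is correct and follows essentially the same route as the paper: select points on the ray where the running maximum of $|f^{(k)}|$ over $[0,r]$ is attained, expand $f$ about the origin with a degree-$(k-1)$ Taylor polynomial plus a remainder involving $f^{(k)}$, dominate the remainder by $|f^{(k)}(z_n)|\,r_n^k$ (up to the harmless factor $1/k!$), and absorb the polynomial part into the $o(1)$ using $|f^{(k)}(z_n)|\to\infty$. Your write-up is in fact more careful than the paper's on two points it glosses over — why $r_n\to\infty$ and why $|f^{(k)}|$ may be pulled out of the integral (the domination on the whole sub-segment) — but the underlying argument is the same.
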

\begin{proof}
	Let $ T(r,f^{(k)},\theta)=\displaystyle\max_{\substack{0\leq |z|\leq r \\ \arg z=\theta}}\bigg\{\bigg|f^{(k)}(z)\bigg|\bigg\}$. It implies that there exists an infinite sequence of points $ z_n=r_ne^{i\theta} $ where $ r_n\rightarrow\infty $ such that $ T(r,f^{(k)},\theta)=|f^{(k)}(r_ne^{i\theta})| $ for all $ n $. Therefore for each $ n $, one can get the following easily \beas f(z_n)=f(0)+\int_{0}^{z_n}f^{\prime}(z)dz,  \eeas  \beas  f(z_n)=f(0)+\frac{z_n}{(1)!}\int_{0}^{z_n}\int_{0}^{z}f^{\prime\prime}(z)dzdz, \eeas \beas\vdots\eeas
	\beas f(z_n)=\sum_{i=0}^{k-1}\frac{z_n^i}{(i)!}f^{(i)}(0)+\bigg\{\int_{0}^{z_n}\overbrace{\int_{0}^{z}\ldots\int_{0}^{z}}^{(k-1)-\text{times}}f^{(k)}(z)\overbrace{dz\ldots dz}^{k-\text{times}}\bigg\}. \eeas So, applying triangle inequality, we get \beas && |f(z)|\\ &\leq&\bigg|\sum_{i=0}^{k-1}\frac{z_n^i}{(i)!}f^{(i)}(0)\bigg|+\bigg|\bigg\{\int_{0}^{z_n}\overbrace{\int_{0}^{z}\ldots\int_{0}^{z}}^{(k-1)-\text{times}}f^{(k)}(z)\overbrace{dz\ldots dz}^{k-\text{times}}\bigg\}\bigg|\\ &\leq& \bigg|\sum_{i=0}^{k-1}\frac{z_n^i}{(i)!}f^{(i)}(0)\bigg|+\bigg|f^{(k)}(z)\bigg|\bigg|\bigg\{\int_{0}^{z_n}\overbrace{\int_{0}^{z}\ldots\int_{0}^{z}}^{(k-1)-\text{times}}\overbrace{dz\ldots dz}^{k-\text{times}}\bigg\}\bigg|\\ &\leq& \bigg|\sum_{i=0}^{k-1}\frac{z_n^i}{(i)!}f^{(i)}(0)\bigg|+\bigg|f^{(k)}(z)\bigg||z_n|^k.\eeas Since $ f^{(k)}(z)\rightarrow\infty $, so we obtained (\ref{e2.1}).
\end{proof}
\begin{lem}\cite{Mar-AFSUT-1931}\label{lm5}
	A class $ \mathcal{C} $ of functions $ f $ meromorphic in a domain $ D\subset\mathcal{C} $ is normal in $ D $ if and only if $ f^{\#} $ is uniformly bounded on any compact subset of $ D $ for $ f\in\mathcal{C}. $
\end{lem}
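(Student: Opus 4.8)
The plan is to prove both implications of this classical criterion (Marty's theorem), organizing the argument around the chordal metric on the Riemann sphere and around the fact that $f^{\#}$ is precisely the derivative of $f$ measured with the Euclidean metric on the source and the spherical metric on the target. Write $\chi(\alpha,\beta)$ for the chordal distance on $\ol{\mathbb{C}}$, so that $\chi(\alpha,\beta)=|\alpha-\beta|/\big(\sqrt{1+|\alpha|^2}\sqrt{1+|\beta|^2}\big)$ for finite $\alpha,\beta$ and $\chi(\alpha,\infty)=1/\sqrt{1+|\alpha|^2}$. The first thing I would record is the infinitesimal-to-global estimate: for any $z_1,z_2$ in $D$ joined by a rectifiable path $\gamma\subset D$, one has $\chi(f(z_1),f(z_2))\leq\int_{\gamma}f^{\#}(z)\,|dz|$, which follows by integrating the relation $\chi(f(w),f(z))=f^{\#}(z)\,|w-z|+o(|w-z|)$ along $\gamma$.

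For sufficiency, suppose $f^{\#}$ is uniformly bounded on compact subsets for all $f\in\mathcal{C}$. Fix a point of $D$ and a small closed disk about it contained in $D$; on that disk $f^{\#}\leq M$ uniformly in $f$, and taking $\gamma$ to be the straight segment between two points of the disk the integral estimate yields $\chi(f(z_1),f(z_2))\leq M|z_1-z_2|$. Thus the family $\mathcal{C}$, viewed as a family of maps into the compact metric space $(\ol{\mathbb{C}},\chi)$, is locally equicontinuous, hence equicontinuous on every compact subset after a finite cover. Since the target sphere is compact, pointwise relative compactness is automatic, so the Arzel\`a--Ascoli theorem gives that every sequence in $\mathcal{C}$ has a subsequence converging uniformly on compact subsets in the chordal metric; that is, $\mathcal{C}$ is normal.

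For necessity, I would argue by contradiction: if $f^{\#}$ were not uniformly bounded on some compact $K\subset D$, there would be $f_n\in\mathcal{C}$ and $z_n\in K$ with $f_n^{\#}(z_n)\to\infty$. By normality I pass to a subsequence with $f_n\to g$ uniformly on compact subsets in the chordal metric, where $g$ is either meromorphic or identically $\infty$, and by compactness of $K$ I may assume $z_n\to z_0\in K$. The crux is that $f_n^{\#}\to g^{\#}$ locally uniformly (with $g^{\#}\equiv 0$ in the case $g\equiv\infty$). To see this I would use the inversion invariance $f^{\#}=(1/f)^{\#}$: near any point the limit $g$ is bounded away from either $\infty$ or $0$ in the chordal metric, so after possibly replacing $f_n,g$ by $1/f_n,1/g$ the chordal convergence upgrades to ordinary local uniform convergence of holomorphic functions, whence $f_n'\to g'$ by the Weierstrass theorem and $f_n^{\#}\to g^{\#}$ follows. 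Combining uniform convergence with continuity of $g^{\#}$ then gives $f_n^{\#}(z_n)\to g^{\#}(z_0)<\infty$, contradicting $f_n^{\#}(z_n)\to\infty$.

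The main obstacle is the necessity direction, specifically establishing continuity of the spherical derivative under chordal convergence through poles: one must split into the cases where the limit is finite, infinite, or identically $\infty$ and exploit $f^{\#}=(1/f)^{\#}$ to reduce each case to ordinary local uniform convergence before invoking Weierstrass. The sufficiency direction is comparatively routine once the integral estimate and the compactness of $(\ol{\mathbb{C}},\chi)$ are in hand.
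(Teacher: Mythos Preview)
Your proposal is a correct and standard proof of Marty's theorem. However, the paper does not supply its own proof of this lemma: it is stated as a cited result from \cite{Mar-AFSUT-1931} and used as a black box, so there is no argument in the paper to compare against. Your write-up reproduces the classical approach (chordal equicontinuity via the integral bound on $f^{\#}$ for sufficiency, and stability of $f^{\#}$ under spherical convergence together with the inversion identity $f^{\#}=(1/f)^{\#}$ for necessity), which is exactly what one finds in the standard references.
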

\begin{lem}\cite{Hei & Kor & Rat-BLMS-2004,Ngo & Ost-ANASD-272}\label{lm6}
	Let $ f $ be an entire function of order at most $ 1 $ and $ k $ be a positive integer. Then \beas m\left(r,\frac{f^{(k)}}{f}\right)=o(\log r),\;\;\;\;\text{as}\;r\rightarrow\infty.  \eeas
\end{lem}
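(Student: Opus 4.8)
The plan is to prove the bound in two stages: first reduce to the case $k=1$, and then establish $m(r, g'/g) = o(\log r)$ for an entire function $g$ of order at most $1$. For the reduction I would write
\[
\frac{f^{(k)}}{f} = \prod_{j=1}^{k}\frac{f^{(j)}}{f^{(j-1)}},
\]
and use $\log^{+}|xy| \le \log^{+}|x| + \log^{+}|y|$ to get $m(r, f^{(k)}/f) \le \sum_{j=1}^{k} m(r, f^{(j)}/f^{(j-1)})$. Since differentiation does not increase the order of an entire function, each $g := f^{(j-1)}$ is entire of order at most $1$, so each summand is $m(r, g'/g)$ for such a $g$; as $k$ is fixed, a finite sum of $o(\log r)$ terms is again $o(\log r)$.

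For the core estimate I would make the role of the hypothesis ``order $\le 1$'' explicit through the Hadamard factorization. With $\rho = \rho(g) \le 1$ we have $g(z) = c\,z^{m} e^{\alpha z}\,\Pi(z)$, where $\Pi(z) = \prod_{n} E_{p}(z/z_{n})$ is the canonical product over the zeros $z_n$ of $g$, the genus $p \in \{0,1\}$, and the exponential polynomial is \emph{linear} because its degree is at most $\rho \le 1$. Logarithmic differentiation gives
\[
\frac{g'(z)}{g(z)} = \frac{m}{z} + \alpha + \frac{\Pi'(z)}{\Pi(z)}, \qquad \frac{\Pi'(z)}{\Pi(z)} = \sum_{n}\frac{z^{p}}{z_{n}^{p}\,(z - z_{n})}.
\]
The terms $m/z$ and $\alpha$ contribute only $O(1)$ to $m(r, g'/g)$, and this is exactly where $\rho \le 1$ enters: for order $>1$ the exponential polynomial would have degree $\ge 2$, and its derivative would already force a genuine $\asymp \log r$ term. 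It therefore remains to prove $m(r, \Pi'/\Pi) = o(\log r)$.

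To estimate $m(r, \Pi'/\Pi)$ I would split the sum on the circle $|z| = r$ according to $|z_n| \le 2r$ and $|z_n| > 2r$. The far part is harmless: for $|z_n| > 2r$ one has $|z - z_n| \ge |z_n|/2$, so that part is bounded by $2 r^{p}\sum_{|z_n|>2r}|z_n|^{-(p+1)}$, which is $O(1)$ because the exponent of convergence of $\{z_n\}$ is at most $\rho \le 1 \le p+1$ (the genus being chosen accordingly). The difficulty lies entirely in the near part $\sum_{|z_n| \le 2r} z^{p} z_{n}^{-p}(z - z_{n})^{-1}$, whose number of terms is $n(2r, 1/g) = O(r^{\rho + \epsilon})$; a naive term-by-term use of $\log^{+}$ loses a factor $\log(\#\text{terms}) \asymp \log r$ and so would only recover the known coarse bound $m(r,g'/g) = O(\log r)$.

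The main obstacle, then, is extracting the sharp $o(\log r)$ from the near part, which demands cancellation rather than the triangle inequality. The key sub-estimate is the \emph{uniform} bound $\frac{1}{2\pi}\int_0^{2\pi}\log^{+}|re^{i\theta} - z_n|^{-1}\,d\theta = O(1)$, which follows from $\frac{1}{2\pi}\int_0^{2\pi}\log|re^{i\theta}-a|\,d\theta = \log\max\{r,|a|\}$, combined with a Cartan-type argument showing that outside an angularly small set of $\theta$ the point $re^{i\theta}$ stays a definite distance from all the $z_n$. The route I would ultimately follow is to bypass both the reduction and the Hadamard analysis and invoke Gundersen's estimate (Lemma \ref{lm3}) directly for general $k$: since $\rho \le 1$, for each $\epsilon>0$, off an angular exceptional set of measure zero one has $|f^{(k)}(z)/f(z)| \le |z|^{k(\rho+\epsilon-1)} \le |z|^{k\epsilon}$, so $\log^{+}|f^{(k)}/f| \le k\epsilon \log r$ in the good directions, giving a contribution $\le k\epsilon\log r$ there, after which letting $\epsilon\to 0$ finishes the proof. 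The one genuinely delicate point I expect to require the most care is that Gundersen's radius threshold $R_0(\theta)$ depends on the direction, so for fixed $r$ the good bound holds only off a set $E_r$ with $\operatorname{meas}(E_r) \to 0$; showing that $\int_{E_r}\log^{+}|f^{(k)}/f|\,d\theta = o(\log r)$ is the crux, and I would settle it either by a uniform-integrability argument against the coarse bound $m(r,f^{(k)}/f)=O(\log r)$, or by the Ngoan--Ostrovskii integral representation of $m(r,f'/f)$ in terms of quantities such as $\int_0^{2r} t^{-2}N(t,1/f)\,dt$, which is directly $o(\log r)$ when the exponent of convergence is at most $1$.
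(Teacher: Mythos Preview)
The paper does not supply a proof of this lemma; it is quoted directly from the cited references of Ngoan--Ostrovskii and Heittokangas--Korhonen--R\"atty\"a, so there is no in-paper argument to compare your proposal against.

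Your sketch contains two distinct routes. The first (telescoping to $k=1$, Hadamard factorization, and the near/far splitting of the logarithmic-derivative series, with the key averaged bound $\tfrac{1}{2\pi}\int_0^{2\pi}\log^{+}|re^{i\theta}-z_n|^{-1}\,d\theta=O(1)$ supplying the cancellation) is essentially the classical Ngoan--Ostrovskii argument and, carried through with care, does yield the $o(\log r)$ conclusion; this is in effect what the cited references prove. Your second route, deducing the lemma from Gundersen's pointwise estimate (Lemma~\ref{lm3}), has a genuine gap at exactly the point you flag as ``the crux'': the proposed uniform-integrability argument against the coarse bound $m(r,f^{(k)}/f)=O(\log r)$ does not work as stated, because that coarse bound is only an \emph{average} over $\theta$ and carries no information about how $\log^{+}|f^{(k)}/f|$ is distributed angularly, hence gives no control of $\int_{E_r}\log^{+}|f^{(k)}/f|\,d\theta$ as $\operatorname{meas}(E_r)\to 0$. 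Equi-integrability of the family $\theta\mapsto(\log r)^{-1}\log^{+}|f^{(k)}(re^{i\theta})/f(re^{i\theta})|$ would require something like an $L^p$ bound with $p>1$, which is not available from the $O(\log r)$ average alone. Your fallback to the Ngoan--Ostrovskii integral representation is then the only viable option, at which point you are effectively invoking the same reference the paper cites.
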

\begin{lem}\label{lm7}
	Let $ \alpha $ be a non-constant entire function and $ a, b $ and $ c $ are three distinct finite complex numbers. Then there does not exist an entire function $ f $ satisfying the differential equation \bea\label{e2.2} \frac{\left(f^{(k)}-a\right)\left(f^{(k)}-b\right)\left(f^{(k)}-c\right)}{(f-a)(f-b)(f-c)}=e^{\alpha}.  \eea
\end{lem}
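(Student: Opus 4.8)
The plan is to argue by contradiction. Suppose an entire function $f$ satisfies (\ref{e2.2}), and put $P(w)=(w-a)(w-b)(w-c)$ and $M=\max\{|a|,|b|,|c|\}$. Since $e^{\alpha}$ is entire and zero-free, (\ref{e2.2}) forces $P(f)$ and $P(f^{(k)})$ to have exactly the same zeros with the same multiplicities; in particular $f(z)\in\{a,b,c\}$ precisely when $f^{(k)}(z)\in\{a,b,c\}$, so $|f^{(k)}(z)|\le M$ whenever $f(z)\in\{a,b,c\}$. If $f$ were a polynomial, then $P(f^{(k)})/P(f)$ would be a rational function with no zeros and no poles, hence a nonzero constant, making $e^{\alpha}$---and therefore $\alpha$---constant, contrary to hypothesis; so $f$ is transcendental.

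The first substantive step would be to prove that $f$ has order at most $1$. Because $|f^{(k)}|\le M$ on the set $\{z:f(z)\in\{a,b,c\}\}$, a rescaling argument applied to the translated family $\{f(z+w):w\in\mathbb C\}$ in the spirit of Lemma \ref{lm2} shows this family is normal on $\mathbb C$ (should it fail to be, one rescales to a non-constant entire limit $g$ which, by Lemma \ref{lm1}, has order $\le1$ and whose $k$-th derivative vanishes at every $\{a,b,c\}$-point of $g$---excluded by the second main theorem together with Lemma \ref{lm6}); hence $f^{\#}$ is bounded on $\mathbb C$ by Marty's criterion (Lemma \ref{lm5}), and so $f$ has order $\le1$ by Lemma \ref{lm1}. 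Differentiating the logarithm of (\ref{e2.2}) then gives
\[
\alpha'=\sum_{\lambda\in\{a,b,c\}}\frac{(f^{(k)}-\lambda)'}{f^{(k)}-\lambda}-\sum_{\lambda\in\{a,b,c\}}\frac{(f-\lambda)'}{f-\lambda},
\]
and since each $f-\lambda$ and $f^{(k)}-\lambda$ is entire of order $\le1$, Lemma \ref{lm6} yields $m(r,\alpha')=o(\log r)$; as $\alpha'$ is entire this is possible only if $\alpha'$ is constant, so $\alpha(z)=cz+d$ for some constants $c$ and $d$.

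It remains to rule out $c\ne0$. Let $H_{+}$ and $H_{-}$ be the open half-planes on which $\operatorname{Re}(cz)\to+\infty$ and $\operatorname{Re}(cz)\to-\infty$, respectively. On almost every ray $\arg z=\psi$ lying in $H_{+}$, Lemma \ref{lm3} (with $\rho\le1$) gives $|f^{(k)}(z)|\le|z|^{\varepsilon}|f(z)|$ for $|z|$ large, so whenever $|f(z)|\ge2M$ the modulus of the left side of (\ref{e2.2}) is $O(|z|^{3\varepsilon})$, which contradicts $|e^{cz+d}|$ being exponentially large along the ray; hence $f$ stays bounded on each such ray. On every ray $\arg z=\theta$ lying in $H_{-}$, if $|f^{(k)}|$ were unbounded, then Lemma \ref{lm4} would produce points $z_{n}$ on the ray with $f^{(k)}(z_{n})\to\infty$ and $|f(z_{n})|\le(1+o(1))|z_{n}|^{k}|f^{(k)}(z_{n})|$, whence a short computation with (\ref{e2.2}) gives $|e^{cz_{n}+d}|\ge(1+o(1))|z_{n}|^{-3k}$, i.e.\ $\operatorname{Re}(cz_{n}+d)\ge-3k\log|z_{n}|+O(1)$---impossible, since $\operatorname{Re}(cz_{n}+d)\to-\infty$ linearly. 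So $|f^{(k)}|$ is bounded on every ray in $H_{-}$, and integrating $k$ times shows $|f(z)|=O(|z|^{k})$ along each such ray. Thus $|f(re^{i\theta})|=O(r^{k})$ for $\theta$ outside a set of measure zero, and a Phragm\'en--Lindel\"of argument on thin sectors (using that $f$ has order $\le1$) upgrades this to $|f(z)|=O(|z|^{k})$ on all of $\mathbb C$; then $f$ is a polynomial, contradicting its transcendence. Therefore $c=0$, $\alpha$ is constant, and the hypothesis that $\alpha$ is non-constant is contradicted; hence no such $f$ exists.

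I expect the main obstacles to be the reduction to finite order---the non-normality/rescaling analysis is delicate, since no hypothesis on the multiplicities of the $\{a,b,c\}$-points of $f$ is available---and the concluding Phragm\'en--Lindel\"of step, which must promote the ray-by-ray growth estimates to a global polynomial bound.
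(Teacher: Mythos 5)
Your overall architecture is the same as the paper's: bound $f^{(k)}$ at the $\{a,b,c\}$-points of $f$, deduce normality of the translated family $\{f(z+w):w\in\mathbb{C}\}$, hence boundedness of $f^{\#}$ by Marty (Lemma \ref{lm5}) and order $\le 1$ by Lemma \ref{lm1}; conclude $\alpha(z)=\mathcal{A}z+\mathcal{B}$ with $\mathcal{A}\neq 0$; then a ray-by-ray analysis, using Gundersen's estimate (Lemma \ref{lm3}) on rays where $\operatorname{Re}(\mathcal{A}z)\to+\infty$ and Lemma \ref{lm4} on rays where it tends to $-\infty$, followed by a Phragm\'en--Lindel\"of/Liouville step forcing $f$ to be a polynomial of degree $\le k$, a contradiction. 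Two of your local variants are genuine simplifications: bounding the quotient in (\ref{e2.2}) directly at points where $|f|\ge 2M$ lets you bypass the paper's separate disposal of the case $abc=0$ (done there via Lemma \ref{lm6}) and the partial-fraction manipulation leading to (\ref{e2.5})--(\ref{e2.6}); and obtaining linearity of $\alpha$ from Lemma \ref{lm6} is an acceptable substitute for the order argument. The ray estimates, the $O(r^{k})$ bound from integrating the bounded $f^{(k)}$, and the sector-wise Phragm\'en--Lindel\"of conclusion all match the paper's reasoning.

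The genuine gap is in the normality step, exactly where you yourself signalled delicacy. The paper simply invokes Lemma \ref{lm2}, whose hypothesis that the zeros of $f$ have multiplicity $\ge k$ is not available in Lemma \ref{lm7} as stated (it is assumed in Theorem \ref{th1} and Lemma \ref{lm8}); your attempted repair by plain Zalcman rescaling does not close this for $k\ge 2$. The rescaled limit $g$ does inherit the property that $g^{(k)}$ vanishes at every $\{a,b,c\}$-point of $g$ (Hurwitz plus the factor $\rho_j^{k}\to 0$), but the second-main-theorem exclusion you appeal to only bites when $g^{(k)}\not\equiv 0$: the limit may perfectly well be a non-constant polynomial of degree $<k$, for which your condition is vacuous and no contradiction results. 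This is not a removable technicality: for $k=2$ the family $f_n(z)=nz$ has $f_n''\equiv 0$, hence $|f_n''|\le M$ wherever $f_n\in\{a,b,c\}$, yet it is not normal at the origin; so no version of Lemma \ref{lm2} without a multiplicity (or zero-free) hypothesis is true, and normality here cannot be established from the local data you use. For $k=1$ the multiplicity hypothesis is vacuous and your proof is essentially complete (indeed more careful than the paper's at this point); for $k\ge 2$ you must either add the hypothesis that the zeros of $f$ have multiplicity $\ge k$ (as the paper implicitly does, and as Theorem \ref{th1} assumes) or extract such information from (\ref{e2.2}) before invoking normality. The rest of your argument is sound and parallels the paper's.
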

\begin{proof}
	Let if possible there exists an entire function satisfying (\ref{e2.2}). Then we see that $ |f^{(i)}(z)|\leq \max\{a, b, c\} $ whenever $ f(z)\in\{a, b, c\} $, $i\in\{1, 2,..., k\}$. Thus by Lemma \ref{lm2}, the family $ \mathcal{F}_w=\{f_w:w\in\mathbb{C}\}, $ where $ f_w(z)=f(w+z) $ is normal on the unit disc, so by Marty's Theorem, we get $ f^{\#}(w)=(f_w)^{\#}(0) $ is uniformly bounded for all $ w\in\mathbb{C} $. Therefore from Lemma \ref{lm1}, we get that $ f $ has order at most $ 1 $. \par Now from (\ref{e2.2}), we obtained $ \alpha(z)=\mathcal{A}z+\mathcal{B}, $ where $ \mathcal{A} $ and $ \mathcal{B} $ are two constants. It is clear that $ \mathcal{A}\neq 0 $, since $ \alpha $ is non-constant.\par Next we claim that $ abc\neq 0 $. On contrary, let $ abc=0. $ i.e., $ a=0 $ or $ b=0 $ or $ c=0 $. Without any loss of generality, we may assume that $ a=0 $. Then from (\ref{e2.2}), we get \beas   \frac{f^{(k)}\left(f^{(k)}-b\right)\left(f^{(k)}-c\right)}{f(f-b)(f-c)}=e^{\mathcal{A}z+\mathcal{B}}. \eeas Again we see that \beas &&  \frac{f^{(k)}\left(f^{(k)}-b\right)\left(f^{(k)}-c\right)}{f(f-b)(f-c)}\\ &=&\frac{\left(f^{(k)}\right)^3}{f(f-b)(f-c)}-\frac{(b+c)\left(f^{(k)}\right)^2}{f(f-b)(f-c)}+\frac{bcf^{(k)}}{f(f-b)(f-c)}\\&=&\frac{f^{(k)}}{f}\frac{f^{(k)}}{f-b}\frac{f^{(k)}}{f-c}-\frac{b+c}{b-c}\left(\frac{f^{(k)}}{f-b}-\frac{f^{(k)}}{f-c}\right)+bc\left(\frac{\mathcal{A}_1f^{(k)}}{f}+\frac{\mathcal{B}_1f^{(k)}}{f-b}+\frac{\mathcal{B}_1f^{(k)}}{f-c}\right), \eeas where $ \mathcal{A}_1, \mathcal{B}_1 $ and $ \mathcal{C}_1 $ are constants. Next we see that there exists $ \mathcal{A}_2, \mathcal{B}_2 $ and $ \mathcal{C}_2 $ (\cite{bibid}) such that \beas && m\left(r,\frac{f^{(k)}(f^{(k)}-b)(f^{(k)}-c)}{f(f-b)(f-c)}\right)\\ &\leq&\mathcal{A}_2\;m\left(r,\frac{f^{(k)}}{f}\right)+\mathcal{B}_2\;m\left(r,\frac{f^{(k)}}{f-b}\right)+\mathcal{B}_2\;m\left(r,\frac{f^{(k)}}{f-c}\right)+O(1). \eeas Thus by Lemma \ref{lm6}, we get \beas T(r,e^{\mathcal{A}z+\mathcal{B}})=m(r,e^{\mathcal{A}z+\mathcal{B}})=o(\log r),  \eeas which is not possible since $ \mathcal{A}\neq 0. $\par Therefore $ abc\neq 0 $. Next we get \bea \label{e2.3} g(z)=f(z/\mathcal{A})\;\; i.e.,\;\; g^{(k)}(z)=\frac{1}{\mathcal{A}^k}f^{(k)}(z/\mathcal{A}).  \eea Using (\ref{e2.3}) in (\ref{e2.2}), we get \bea\label{e2.4}\frac{\left(g^{(k)}-a/\mathcal{A}^k\right)\left(g^{(k)}-b/\mathcal{A}^k\right)\left(g^{(k)}-c/\mathcal{A}^k\right)}{(g-a)(g-b)(g-c)}\equiv\mathcal{C}e^z, \eea where $ \mathcal{C}=\displaystyle\frac{e^{\mathcal{B}}}{\mathcal{A}^{3k}}\neq 0. $ Next (\ref{e2.4}) can be written as follows \bea\label{e2.5} \frac{\left(g^{(k)}\right)^3+\mathcal{C}_1\left(g^{(k)}\right)^2+\mathcal{C}_2g^{(k)}}{(g-a)(g-b)(g-c)}-\mathcal{C}e^z =\frac{\mathcal{C}_3}{(g-a)(g-b)(g-c)}, \eea where $ \mathcal{C}_j $ are constants with $ \mathcal{C}_3\neq 0 $. With $ \epsilon=\frac{1}{3} $, Lemma \ref{lm3} shows that there exists a set $ E\subset [0,2\pi) $ of measure zero such that for each $ \psi_0 \in [0,2\pi)-E $, there is a constant $ R_0=R_0(\psi_0)>0 $ such that whenever $ arg z=\psi_0 $ and $ |z|>R_0 $, \bea \label{e2.6} \bigg|\frac{\left(g^{(k)}\right)^3+\mathcal{C}_1\left(g^{(k)}\right)^2+\mathcal{C}_2g^{(k)}}{(g-a)(g-b)(g-c)}\bigg|\leq K|z|, \eea for some positive constant $ K $. Now we may suppose that $ \pi/2 $ and $ 3\pi/2 $ are continued in the set $ E $. Then $ [0,2\pi)-E=E_1\cup E_2$, where $ E_1=\{\theta\in[0,2\pi):\cos\theta >0\} $ and $ E_2=\{\theta\in[0,2\pi):\cos\theta <0\} $. Let $ \theta\in E_1 $, then by (\ref{e2.5}) and (\ref{e2.6}), we have for sufficiently large $ r $, \beas && \bigg|\frac{\mathcal{C}_3}{\left(g(re^{i\theta})-a\right)\left(g(re^{i\theta})-b\right)\left(g(re^{i\theta})-c\right)}\bigg|\\ &=&\bigg|\frac{\left(g^{(k)}(re^{i\theta})\right)^3+\mathcal{C}_1\left(g^{(k)}(re^{i\theta})\right)^2+\mathcal{C}_2g^{(k)}(re^{i\theta})}{\left(g(re^{i\theta})-a\right)\left(g(re^{i\theta})-b\right)\left(g(re^{i\theta})-c\right)}-\mathcal{C}e^{re^{i\theta}}\bigg|\\ &\geq& |\mathcal{C}|e^{r\cos\theta}-Kr\\ &\rightarrow&\infty,\;\;\;\text{as}\;\;\; r\rightarrow\infty.  \eeas
	It follows that \bea\label{e2.7} g(re^{i\theta})\rightarrow a,\; b\;\; \text{or}\;\; c,\;\;\;\text{as}\;\;\; r\rightarrow\infty.  \eea \par Next let $ \theta\in E_2 $. We claim that $ |g^{(k)}(re^{i\theta})| $ is bounded as $ r\rightarrow\infty $. Suppose on the contrary that $ |g^{(k)}(re^{i\theta})| $ is unbounded as $ r\rightarrow\infty $. Then by Lemma \ref{lm4}, there exists a sequence $ r_n\rightarrow\infty $ such that $ |g^{(k)}(re^{i\theta})|\rightarrow\infty $ and \bea\label{e2.8}\bigg|g(re^{i\theta})g^{(k)}(re^{i\theta})\bigg|\leq (1+o(1))r_n^k.   \eea Now with $ |g^{(k)}(r_ne^{i\theta})|\rightarrow\infty $, we note that \bea\label{e2.9} \bigg|\frac{\left(g(r_ne^{i\theta})-a\right)\left(g(r_ne^{i\theta})-b\right)\left(g(r_ne^{i\theta})-c\right)}{\left(g^{(k)}(r_ne^{i\theta})\right)^3+\mathcal{C}_1\left(g^{(k)}(r_ne^{i\theta})\right)^2+\mathcal{C}_2g^{(k)}(r_ne^{i\theta})}\bigg|\leq (1+o(1))r_n^{3k} .\eea Again since $ |g^{(k)}(r_ne^{i\theta})|\rightarrow\infty $, it follows from (\ref{e2.5}) that \bea\label{e2.10} &&\bigg|\frac{\left(g(r_ne^{i\theta})-a\right)\left(g(r_ne^{i\theta})-b\right)\left(g(r_ne^{i\theta})-c\right)}{r_n^{3k}\mathcal{C}_3}\bigg|\\ &=& \bigg|\frac{\left(g^{(k)}(r_ne^{i\theta})\right)^3+\mathcal{C}_1\left(g^{(k)}(r_ne^{i\theta})\right)^2+\mathcal{C}_2g^{(k)}(r_ne^{i\theta})-\mathcal{C}_3}{r_n^{3k}|\mathcal{C}_3||\mathcal{C}|e^{r_ne^{i\theta}}}\bigg|\nonumber\\ &=&\nonumber r_n^{-3k}e^{-r_n\cos\theta}\frac{|\left(g^{(k)}(r_ne^{i\theta})\right)^3+\mathcal{C}_1\left(g^{(k)}(r_ne^{i\theta})\right)^2+\mathcal{C}_2g^{(k)}(r_ne^{i\theta})-\mathcal{C}_3|}{|\mathcal{C}_3\mathcal{C}|}\\ &\rightarrow& \infty\nonumber.  \eea Thus from (\ref{e2.5}), (\ref{e2.9}) and (\ref{e2.10}), we get \beas && 1-o(1)\\ &\leq& \bigg|r_n^{3k}\frac{\left(g^{(k)}(r_ne^{i\theta})\right)^3+\mathcal{C}_1\left(g^{(k)}(r_ne^{i\theta})\right)^2+\mathcal{C}_2g^{(k)}(r_ne^{i\theta})-\mathcal{C}_3}{\left(g(r_ne^{i\theta})-a\right)\left(g(r_ne^{i\theta})-b\right)\left(g(r_ne^{i\theta})-c\right)}\bigg|\\ &\leq &\bigg|\frac{r_n^{3k}\mathcal{C}_3}{\left(g(r_ne^{i\theta})-a\right)\left(g(r_ne^{i\theta})-b\right)\left(g(r_ne^{i\theta})-c\right)}\bigg|+ |\mathcal{C}|r_n^{3k}e^{r_n\cos\theta}\\ &\rightarrow& 0, \eeas which is absurd. Hence our suppositions that $ |g^{(k)}(r_ne^{i\theta})| $ is bounded as $ r\rightarrow\infty $ for each $ \theta\in E_2 $. One can get easily that \beas g(re^{i\theta}) = \sum_{i=0}^{k-1}\frac{z_n^i}{(i)!}f^{(i)}(0)+\left(e^{i\theta}\right)^k\bigg\{\int_{0}^{r}\overbrace{\int_{0}^{t}\ldots\int_{0}^{t}}^{(k-1)-\text{times}}f^{(k)}(z)\overbrace{dt\ldots dt}^{k-\text{times}}\bigg\}.\eeas So \bea\label{e2.11} && |g(re^{i\theta})|\\ &\leq& \bigg|\sum_{i=0}^{k-1}\frac{z_n^i}{(i)!}f^{(i)}(0)\bigg|+\bigg|\bigg\{\int_{0}^{r}\overbrace{\int_{0}^{t}\ldots\int_{0}^{t}}^{(k-1)-\text{times}}f^{(k)}(z)\overbrace{dt\ldots dt}^{k-\text{times}}\bigg\}\bigg|\nonumber\\ &\leq&\bigg|\sum_{i=0}^{k-1}\frac{z_n^i}{(i)!}f^{(i)}(0)\bigg|+\mathcal{M}r^{k}\nonumber,   \eea where $ \mathcal{M}=\mathcal{M}(\theta) $ is a positive constant depending on $ \theta $.\par Hence by (\ref{e2.7}) and (\ref{e2.11}), for every $ \theta\in [0,2\pi)-E $, there exists a positive constant $ \mathcal{L}=\mathcal{L}(\theta) $ such that for $ z=re^{i\theta} $ with $ r>r_0, $ \bea\label{e2.12} \bigg|\frac{g(z)}{z^k}\bigg|\leq\mathcal{L}. \eea Since $ g $ has order at most $ 1 $, it follows from (\ref{e2.7}), (\ref{e2.12}), the Phrag$\acute{e}$n-Lindel$\ddot{o}$f Theorem \cite{bibid}, and by Lioville's Theorem, $ g $ is a polynomial of degree at most $ k $, which is impossible by (\ref{e2.4}). This completes the proof.
	
\end{proof}
\begin{lem}\label{lm8}
	Let $ S=\{a, b, c\} $ where $ a, \; b$ and $ c $ be any three distinct finite complex numbers and $ \mathcal{A} $ a non-zero constant. If $ E_f(S)=E_{f^{(k)}}(S), $ where $ f $ is an entire function having zeros of multiplicities $\geq k$ and satisfying $ f^{(k)}\not\equiv 0 $ and \bea\label{e2.13} \frac{\left(f^{(k)}-a\right)\left(f^{(k)}-b\right)\left(f^{(k)}-c\right)}{(f-a)(f-b)(f-c)}\equiv\mathcal{A},\eea then $ f $ must take one of the following forms:
	\begin{enumerate}
		\item[(1)] $ f(z)=\mathcal{L}_{\theta}(\beta z) ,$ where $ \beta $ is a root of the equation $ z^k-1=0, $ 
	\item[(2)] $ f(z)=\mathcal{L}_{\theta}(\eta z)+\frac{2}{3}(a+b+c)$, where $ \eta $ is a root of the equation $ z^k+1=0 $ and $ (2a-b-c)(2b-c-a)(2c-a-b)=0, $
	\item[(3)] $ f(z)=\mathcal{L}_{\theta}(\zeta z)+\frac{3\pm i\sqrt{3}}{6}(a+b+c) $, where $ \zeta (\neq 1) $ is a root of the equation $ z^{3k}-1=0 $ and $ a^2+b^2+c^2-ab-bc-ca=0 $,
	\end{enumerate}
	where $ \mathcal{C} $ is a non-zero constant.
\end{lem}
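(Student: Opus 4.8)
The plan has three steps: (i) $f$ has order at most $1$; (ii) $f^{(k)}$ is an affine function of $f$; (iii) solving the resulting linear differential equation returns exactly the three forms. The heart of the matter is step (ii).

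\textbf{Step (i): order of $f$.} Equation (\ref{e2.13}) forces $|f^{(k)}(z)|\le\max\{|a|,|b|,|c|\}$ whenever $f(z)\in\{a,b,c\}$, so Lemma \ref{lm2} applies to the family $\{f(z+w):w\in\mathbb{C}\}$, which is therefore normal on the unit disc. By Lemma \ref{lm5} the spherical derivative $f^{\#}$ is bounded on $\mathbb{C}$, and Lemma \ref{lm1} then yields that $f$ has order at most $1$ --- exactly as at the start of the proof of Lemma \ref{lm7}.

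\textbf{Step (ii): the affine relation.} Put $P(w)=(w-a)(w-b)(w-c)$, so that (\ref{e2.13}) reads $P(f^{(k)})\equiv\mathcal{A}P(f)$; hence $z\mapsto(f(z),f^{(k)}(z))$ is a non-constant holomorphic map of $\mathbb{C}$ into the affine cubic $\Gamma:P(y)=\mathcal{A}P(x)$. I claim its image lies on a line $y=\lambda x+d$. The projective closure of $\Gamma$ meets the line at infinity in the three \emph{distinct} smooth points $[1:\mu\omega^{j}:0]$, $j=0,1,2$, where $\mu^{3}=\mathcal{A}\neq0$ and $\omega=e^{2\pi i/3}$; so if $\Gamma$ were irreducible, its (normalised) affine part would be an elliptic curve minus three points or a rational curve with at least three punctures --- a hyperbolic Riemann surface, which admits no non-constant holomorphic image of $\mathbb{C}$. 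Thus $\Gamma$ is reducible, and since $\mathbb{C}$ is not a finite union of discrete sets, $(f,f^{(k)})$ lies on a single irreducible component: a line or a smooth conic. A smooth-conic component has affine part biholomorphic to $\mathbb{C}^{*}$, which would force $(f,f^{(k)})=(\rho(e^{g}),\sigma(e^{g}))$ for an entire $g$ and rational $\rho,\sigma$ of degree $\le2$ with poles only at $0,\infty$; the order bound on $f$ makes $g$ linear, and then comparing the coefficients of $e^{g}$ in $P(\sigma(e^{g}))=\mathcal{A}P(\rho(e^{g}))$ shows no such component can actually occur. Hence $f^{(k)}=\lambda f+d$ for constants $\lambda\neq0$ and $d$. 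This reduction is the main obstacle; an alternative in the spirit of Lemma \ref{lm7} is to control $f^{(k)}/f$ by Lemmas \ref{lm3} and \ref{lm6}, observe that on the rays where $|f|$ is large the identity $P(f^{(k)})=\mathcal{A}P(f)$ pins $f^{(k)}$ to a constant multiple of $f$, and conclude via Lemma \ref{lm4} and a Phragm\'en--Lindel\"of argument.

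\textbf{Step (iii): the three forms.} Substituting $f^{(k)}=\lambda f+d$ into $P(f^{(k)})=\mathcal{A}P(f)$ and using that a non-constant entire function omits at most one value, we get the polynomial identity $P(\lambda w+d)\equiv\mathcal{A}P(w)$. Its leading coefficient gives $\lambda^{3}=\mathcal{A}$, and the remaining coefficients say that $w\mapsto(w-d)/\lambda$ permutes $\{a,b,c\}$. If this permutation is the identity, then $\lambda=1$, $d=0$, $\mathcal{A}=1$, and $f^{(k)}\equiv f$. If it is a transposition, then $w\mapsto(w-d)/\lambda$ is the affine reflection about the fixed element $w_{0}\in\{a,b,c\}$, whence $\lambda=-1$, $\mathcal{A}=-1$ and $d=2w_{0}=\tfrac{2}{3}(a+b+c)$; this case requires $(2a-b-c)(2b-c-a)(2c-a-b)=0$. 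If it is a $3$-cycle, then $w\mapsto(w-d)/\lambda$ has order $3$ and fixes the centroid $\tfrac13(a+b+c)$, whence $\lambda$ is a primitive cube root of unity, $\mathcal{A}=1$ and $-d/\lambda=\tfrac{3\pm i\sqrt3}{6}(a+b+c)$; this case requires $a^{2}+b^{2}+c^{2}-ab-bc-ca=0$. Finally the general entire solution of $f^{(k)}=\lambda f+d$ is the particular constant $-d/\lambda$ plus the general solution of $g^{(k)}=\lambda g$, namely $f(z)=\mathcal{L}_{\theta}(\zeta z)-d/\lambda$ with $\zeta^{k}=\lambda$ and $\mathcal{L}_{\theta}$ as in (\ref{e1.111}). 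Since $\lambda$ is $1$, $-1$, or a primitive cube root of unity in the three cases, $\zeta$ is a root of $z^{k}-1$, of $z^{k}+1$, or of $z^{3k}-1$ with $\zeta\neq1$, and $-d/\lambda$ is $0$, $\tfrac{2}{3}(a+b+c)$, or $\tfrac{3\pm i\sqrt3}{6}(a+b+c)$ respectively --- precisely forms (1), (2) and (3).
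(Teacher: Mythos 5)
Your argument is correct, but it reaches the conclusion by a genuinely different route than the paper. The paper obtains the order bound exactly as in your Step (i), but then simply asserts the structural form (\ref{e2.14}), $f^{(k)}(z)=\alpha^{k}\mathcal{L}_{\theta}(\alpha z)$, integrates to get (\ref{e2.15}), substitutes into (\ref{e2.13}) and compares coefficients of the powers of $\mathcal{L}_{\theta}(\alpha z)$ (equations (\ref{e2.17})--(\ref{e2.20})), finishing with the cases $\alpha^{k}=1$, $\alpha^{k}=-1$, and $\alpha^{3k}=1$ with $\alpha^{k}\neq\pm1$ --- exactly your identity/transposition/$3$-cycle trichotomy; your Step (iii) is an equivalent, cleaner repackaging and reproduces all the constants and side conditions correctly. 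The substantive difference is Step (ii): you actually prove the affine relation $f^{(k)}=\lambda f+d$ (which the paper in effect assumes when it writes down (\ref{e2.14})) by noting that $(f,f^{(k)})$ maps $\mathbb{C}$ into the cubic $P(y)=\mathcal{A}P(x)$, which must be reducible because an irreducible cubic minus its three points at infinity is hyperbolic; this buys rigor precisely where the paper is weakest, and it confines the use of the order bound to excluding a conic component, since entire solutions of $f^{(k)}=\lambda f+d$ are automatically of the required exponential form. The one compressed point is that exclusion: as phrased, ``comparing the coefficients of $e^{g}$ in $P(\sigma(e^{g}))=\mathcal{A}P(\rho(e^{g}))$'' is empty if $\sigma$ is read as the conic's parametrization, because then the identity holds by construction; what carries the weight is that $\sigma$ is forced by differentiation. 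Concretely, with $g=cz+b$ and $\rho(w)=pw+q+rw^{-1}$, $pr\neq0$, one gets $\sigma(w)=pc^{k}w+r(-c)^{k}w^{-1}$, and the coefficients of $w^{3}$ and $w^{-3}$ in $P(\sigma(w))=\mathcal{A}P(\rho(w))$ give $c^{3k}=\mathcal{A}=(-1)^{k}c^{3k}$, so $k$ is even; but then $f^{(k)}=c^{k}(f-q)$ places the full image on a line, contradicting that it fills the irreducible conic (equivalently, the two asymptotic directions $c^{k}$ and $(-c)^{k}$ would be distinct cube roots of $\mathcal{A}$ with ratio $-1$, which is impossible). With that detail supplied your proof is complete; also cite explicitly the hypothesis that the zeros of $f$ have multiplicity $\geq k$ when invoking Lemma \ref{lm2} in Step (i), since that lemma requires it.
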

\begin{proof}
	From the proof of Lemma \ref{lm7}, we note that $ f $ has order at most $ 1 $. Since $ f $ and $ f^{(k)} $ have the same order and $f$ having zeros of multiplicities $\geq k$ satisfying  $ f^{(k)}\not\equiv 0 $ and $ E_f(S)=E_{f^{(k)}}(S)$, so one must have the following form \bea\label{e2.14} f^{(k)}(z)=c_0\alpha^k e^{\alpha z}+c_1\alpha^k e^{\alpha\theta z}+\ldots+c_{k-1}\alpha^k e^{\alpha\theta^{k-1} z}=\alpha^k\mathcal{L}_{\theta}(\alpha  z),\text{(say)}  \eea where $ c_{i}\in\mathbb{C}$, for $i\in\{0,1,2,\ldots,k-1\}$ with $c_{k-1}\neq 0$, $ \alpha\in\mathbb{C}-\{0\} $,  \beas \theta=\cos\left(\frac{2\pi}{k}\right)+i\sin\left(\frac{2\pi}{k}\right)\eeas and \beas \mathcal{L}_{\theta}(\alpha z)=c_0 e^{\alpha z}+c_1 e^{\alpha\theta z}+\ldots+c_{k-1} e^{\alpha\theta^{k-1} z}.\eeas On integrating (\ref{e2.14}) $ k $-times, we get \bea\label{e2.15} f(z)=\mathcal{L}_{\theta}(\alpha z)+\mathcal{Q}_{k-1}(z), \eea where $ \mathcal{Q}_{k-1} $ is a polynomial of degree $ \leq k-1. $\par Next using (\ref{e2.14}) and (\ref{e2.15}), we get from (\ref{e2.13})
	
	\bea\label{e2.16} && (\alpha^{3k}-\mathcal{A})\left(\mathcal{L}_{\theta}(\alpha z)\right)^3+\left(\mathcal{L}_1\alpha^{2k}-3\mathcal{A}\mathcal{Q}_{k-1}-\mathcal{A}\mathcal{L}_1\right)\left(\mathcal{L}_{\theta}(\alpha z)\right)^2\\ &&+\nonumber \left(\mathcal{L}_2\alpha^k-3\mathcal{A}\mathcal{Q}_{k-1}^2-2\mathcal{A}\mathcal{L}_1\mathcal{Q}_{k-1}-\mathcal{A}\mathcal{L}_2\right)\mathcal{L}_{\theta}(\alpha z)\\ &&\nonumber +\left(\mathcal{L}_3-\mathcal{A}\mathcal{Q}_{k-1}^3-\mathcal{A}\mathcal{L}_1\mathcal{Q}_{k-1}^2-\mathcal{A}\mathcal{L}_2\mathcal{Q}_{k-1}-\mathcal{A}\mathcal{L}_3\right)\equiv 0,    \eea where $ \mathcal{L}_1=-(a+b+c) $, $ \mathcal{L}_2=ab+bc+ca $ and $ \mathcal{L}_3=-abc $. It follows that \bea\label{e2.17} \alpha^{3k}=\mathcal{A},  \eea \bea\label{e2.18} \mathcal{L}_1\alpha^{2k}=\mathcal{A}(3\mathcal{Q}_{k-1}+\mathcal{L}_1), \eea  \bea\label{e2.19} \mathcal{L}_2\alpha^k=\mathcal{A}(3\mathcal{Q}_{k-1}^2+2\mathcal{L}_1\mathcal{Q}_{k-1}+\mathcal{L}_2), \eea  \bea\label{e2.20} \mathcal{L}_3=\mathcal{A}(\mathcal{Q}_{k-1}^3+\mathcal{L}_1\mathcal{Q}_{k-1}^2+\mathcal{L}_2\mathcal{Q}_{k-1}+\mathcal{L}_3).\eea We now discuss the following different cases.\\
	\noindent{\bfseries{Case 1.}} Let $ \alpha\in\{z:z^k-1=0\} $. Then from (\ref{e2.17}) and (\ref{e2.18}), we get $ \mathcal{A}=1 $ and $ \mathcal{Q}_{k-1}=0 $. Thus we see that \beas f(z)=\mathcal{L}_{\theta}(\beta z),\eeas where $ \beta  $ is a root of the equation $ z^k-1=0 .$\\
	\noindent{\bfseries{Case 2.}} Let $ \alpha \in\{z:z^k+1=0\} $. Then from (\ref{e2.17}) and (\ref{e2.18}), we see that $ \mathcal{A}=-1 $ and $ \mathcal{Q}_{k-1}=-\frac{2}{3}\mathcal{L}_1. $  It follows from (\ref{e2.20}) that \beas 2\mathcal{L}_1^3-9\mathcal{L}_1\mathcal{L}_2+27\mathcal{L}_3=0 \eeas which in turn implies that \beas (2a-b-c)(2b-c-a)(2c-a-b)=0. \eeas \par In this case, we get \beas f(z)=\mathcal{L}_{\theta}(\eta z)+\frac{2}{3}(a+b+c),  \eeas where $ \eta $ is a root of the equation $ z^k+1=0. $\\
	\noindent{\bfseries{Case 3.}} Let $ \alpha\not\in\{z:z^k-1=0\}\cup\{z:z^k+1=0\} $. Then by (\ref{e2.17}) and (\ref{e2.18}), we get \bea \label{e2.21} \mathcal{Q}_{k-1}=\frac{1-\alpha^k}{3\alpha^k}\mathcal{L}_1. \eea Then by (\ref{e2.17}), (\ref{e2.19}) and (\ref{e2.21}), we get \bea\label{e2.22}\mathcal{L}_2=\frac{(\mathcal{L}_1)^2}{3}.    \eea Next by (\ref{e2.17}), (\ref{e2.20}), (\ref{e2.21}) and (\ref{e2.22}), we get \bea\label{e2.23} (1-\alpha^{3k})\mathcal{L}_3=\frac{1}{27}(1-\alpha^{3k})\mathcal{L}_1^3. \eea
	\noindent{\bfseries{Subcase 3.1.}} If $ \alpha^{3k}\neq 1 $, then $ \mathcal{L}_3=(\mathcal{L}_1)^3/27 $. This with  (\ref{e2.22}) shows that $ a=b=c $, which is not possible.\\
	\noindent{\bfseries{Subcase 3.2.}} Hence $ \alpha^{3k}-1=0 $. i.e., $ \alpha^{k}=\displaystyle\frac{-1\pm i\sqrt{3}}{2} $. Thus we have $ \mathcal{Q}_{k-1}=-\displaystyle\frac{3\pm i\sqrt{3}}{6}\mathcal{L}_1. $ After simplifying (\ref{e2.22}), we get $ a^2+b^2+c^2-ab-bc-ca=0. $ Therefore we see that \beas f(z)=\mathcal{L}_{\theta}(\zeta z)+\frac{3\pm i\sqrt{3}}{6}(a+b+c), \eeas where $ \zeta(\neq 1) $ is a root of the equation $ z^{3k}-1=0. $
\end{proof}
\section{Proof of Theorem \ref{th1}} Since $ E_f(S)=E_f^{(k)}(S) $, therefore it is clear that \bea\label{e3.1} \frac{\left(f^{(k)}-a\right)\left(f^{(k)}-b\right)\left(f^{(k)}-c\right)}{(f-a)(f-b)(f-c)}\equiv e^{\alpha (z)},\eea where $ \alpha $ is an entire function. We note that by Lemma \ref{lm7}, $ \alpha $ is constant. Then we set $ \mathcal{A}=e^{\alpha} .$ Thus from (\ref{e3.1}) changes to \bea\label{e3.2} \frac{\left(f^{(k)}-a\right)\left(f^{(k)}-b\right)\left(f^{(k)}-c\right)}{(f-a)(f-b)(f-c)}\equiv\mathcal{A}. \eea Next we discuss the following cases.\\
\noindent{\bfseries{Case 1.}} If $ f^{(k)}\neq 0 $, then by Lemma \ref{lm8}, we see that $ f $ takes one of the three forms (1)-(3). So we are done.\\
\noindent{\bfseries{Case 2.}} If $ f^{(k)} $ vanishes at some point $ z_0\in\mathbb{C} $. i.e., $ f^{(k)}(z_0)=0 $. Now differentiating both sides of (\ref{e3.2}), we get  \bea\label{e3.3} &&\bigg\{3\left(f^{(k)}\right)^2-2(a+b+c)f^{(k)}+(ab+bc+ca)\bigg\}f^{(k+1)}\\ &\equiv& \mathcal{A}\bigg\{3f^2-2(a+b+c)f+(ab+bc+ca)\bigg\}f^{\prime}\nonumber.   \eea \par Let $ f^{(k)}(z_0)=0 $, $ k\leq n$. So we may assume \beas f(z)=f(z_0)+A_n(z-z_0)^n+\ldots \eeas \par It is clear that $ f^{(k)}(z)=B_n(z-z_0)^{n-k}+\ldots $ and $ f^{\prime}(z)=nA(z-z_0)^{n-1}+\ldots $. We see that $ L. H. S $ of (\ref{e3.3}) vanishes at $ z_0 $ to order $ n-k $ while $ R. H. S $ of (\ref{e3.3}) vanishes to the order at least $ n-1 $, which is not possible. 
\section{Some Application}
In $ 1996 $, the following conjecture was proposed by Br\"{u}ck \cite{Bru-1996}.\par 
\begin{conj}\cite{Bru-1996}
	Let $ f $ be a non-constant entire function. Suppose that $ \rho_1(f) $ is not a positive integer or infinite. If $ f $ and $ f^{\prime} $ share one finite value $ a $ $ CM $, then  \beas  \frac{f^{\prime}-a}{f-a}=c,\eeas for some non-zero constant $ c $, where $ \rho_1(f) $ is the first iterated order of $ f $ which is defined by \beas  \rho_1(f)=\limsup_{r\rightarrow\infty}\frac{\log\log T(r,f)}{\log r}.\eeas 
	
\end{conj}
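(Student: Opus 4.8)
The plan is to convert the CM-sharing hypothesis into a differential identity and then let the growth restriction on $\rho_{1}(f)$ dictate the conclusion. Since $f$ and $f'$ share the finite value $a$ CM, the quotient $(f'-a)/(f-a)$ is a nowhere-vanishing entire function, so $(f'-a)/(f-a)=e^{\varphi}$ for some entire function $\varphi$; the assertion to be proved is exactly that $\varphi$ is constant. I would split into the cases $a=0$ and $a\neq 0$, the first being short and the second carrying the whole difficulty.

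For $a=0$ we have $f'/f=e^{\varphi}$, hence $f(z)=f(0)\exp\bigl(\int_{0}^{z}e^{\varphi(t)}\,dt\bigr)$. Writing $\Phi(z)=\int_{0}^{z}e^{\varphi}$, the relation $\log M(r,f)=\max_{|z|=r}\mathrm{Re}\,\Phi(z)+O(1)$ gives $\log T(r,f)\asymp T(r,\Phi)$, so $\rho_{1}(f)=\rho(\Phi)=\rho(e^{\varphi})$. If $\varphi$ were a polynomial of degree $n\ge 1$ this would force $\rho_{1}(f)=n$, a positive integer; if $\varphi$ were transcendental, then $\rho(e^{\varphi})=\infty$ and $\rho_{1}(f)=\infty$. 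Both outcomes contradict the hypothesis, so $\varphi$ is constant and $(f'-a)/(f-a)\equiv c$.

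For $a\neq 0$, normalise by replacing $f$ with $f/a$, so $f'-1=e^{\varphi}(f-1)$; the general entire solution is $f=e^{\Phi}\bigl(C+\int_{0}^{z}(1-e^{\varphi})e^{-\Phi}\bigr)$ with $\Phi=\int_{0}^{z}e^{\varphi}$, and generically $\rho_{1}(f)=\rho(e^{\varphi})$, which would finish as above. The genuine obstacle is that the extra additive term may produce resonant cancellation between the two factors of $f$ and depress $\rho_{1}(f)$ below $\rho(e^{\varphi})$ to a non-integer value. To rule this out I would differentiate the identity to get $f''=\varphi'(f'-1)+e^{\varphi}f'$, hence $e^{\varphi}=\dfrac{f''}{f'}-\varphi'\Bigl(1-\dfrac{1}{f'}\Bigr)$; since $\varphi'=(e^{\varphi})'/e^{\varphi}$, the lemma of the logarithmic derivative together with Gundersen's ray estimate (Lemma \ref{lm3}), applied in the style of the proof of Lemma \ref{lm7}, should bound $T(r,e^{\varphi})$ in terms of $T(r,f)$. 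Moreover CM sharing forces every zero of $f-1$ to be simple and to coincide, with multiplicity, with a zero of $f'-1$ (a zero of order $\ge 2$ of $f-1$ would give $f'=0\neq 1$ there), which pins down $\overline{N}(r,1/f')$. Feeding these estimates back should show $\varphi$ cannot be transcendental and, if it is a polynomial of degree $n\ge 1$, must give $\rho_{1}(f)=n$ — again a contradiction — so $\varphi$ is constant.

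The hard part is the last step of the $a\neq 0$ case: controlling $\overline{N}(r,1/f')$, the zeros of $f'$ not coming from the shared value, without assuming that $f$ has finite order. This is precisely where Brück's conjecture remains open in full generality, and the hypothesis that $\rho_{1}(f)$ is neither a positive integer nor infinite is the standard device that makes the growth bookkeeping close (it is how the finite-order and hyperorder-$<\tfrac12$ cases are treated in the literature). I would finally note that the present paper side-steps this difficulty altogether by passing to a set-sharing formulation: replacing the single value $a$ by a three-element set $\{a,b,c\}$ and $f'$ by $f^{(k)}$, Theorem \ref{th1} already enumerates all admissible $f$, and discarding forms $(2)$ and $(3)$ — whose algebraic side conditions on $a,b,c$ fail generically — leaves $f=\mathcal{L}_{\theta}(\beta z)$ with $\beta^{k}=1$, for which $f^{(k)}\equiv f$ since $\theta^{k}=1$; this is the generalization of Brück's conjecture announced in the abstract.
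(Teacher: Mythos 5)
The statement you were asked to prove is not a theorem of the paper at all: it is Br\"uck's conjecture, quoted from \cite{Bru-1996}, and the paper itself states plainly that ``the main conjecture is still open.'' There is consequently no proof in the paper to compare yours against, and your proposal does not (and could not) supply one. Your reduction is the standard and correct one --- CM sharing makes $(f'-a)/(f-a)=e^{\varphi}$ with $\varphi$ entire, and the goal is to show $\varphi$ is constant --- and your treatment of $a=0$ is essentially sound: there $f$ is zero-free, $f=f(0)e^{\Phi}$ with $\Phi=\int_0^z e^{\varphi}$, and $\rho_1(f)=\rho(e^{\varphi})$ lies in $\mathbb{N}\cup\{\infty\}$ whenever $\varphi$ is non-constant, contradicting the hypothesis. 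But for $a\neq 0$ you concede, accurately, that the decisive step --- excluding resonant cancellation in $f=e^{\Phi}\bigl(C+\int(a-ae^{\varphi})e^{-\Phi}\bigr)$ that could depress $\rho_1(f)$ to a finite non-integer while $\varphi$ is non-constant, equivalently controlling $\overline{N}(r,1/f')$ without an a priori order bound --- is exactly where the conjecture is open. Invoking Gundersen's ray estimate and the logarithmic derivative lemma ``in the style of Lemma \ref{lm7}'' does not close this: in the paper those tools are deployed only after the normal-families argument (Lemma \ref{lm2}) has already forced $f$ to have order at most $1$, and single-value CM sharing provides no analogous normality input. So the proposal contains a genuine, and self-acknowledged, gap at its central step.

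Your closing suggestion that Theorem \ref{th1} ``generalizes'' the conjecture and can be specialized back to it is also not right. Theorem \ref{th1} assumes sharing of a three-element set $\{a,b,c\}$ and that all zeros of $f$ have multiplicity $\geq k$; neither hypothesis follows from CM sharing of one value, and discarding forms $(2)$ and $(3)$ because their side conditions ``fail generically'' is not available when $a,b,c$ are arbitrary but fixed. The paper's Section 4 merely restates the conjecture, poses two questions, and remarks that Lemma \ref{lm8} answers them ``in some sense''; it claims no proof, and neither should you.
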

\par Many authors (for the case of differences see \cite{Hei & Kor & JMMA & 2009,Hua & Zha & AM & 2018} and for the cases of derivatives or differential polynomials  see \cite{Aha & Ban & BTUB & 2017} - \cite{Ban & Aha & JCA & 2019} and \cite{Che & Sho & TJM & 2004,Che & Sho & JKMS & 2005,Gun & Yan & JMMA & 1998})  have studied the conjecture under some additional conditions. But the main conjecture is still open. In this direction, it is interesting to ask the following two questions.
\begin{ques}
	Does the conjecture hold if one considers a set having three arbitrary finite complex numbers instead of a value ?
\end{ques}
\begin{ques}
	Is it possible to replace first derivative $ f^{(\prime)} $ by a more general derivative $ f^{(k)} ?$
\end{ques}
\begin{rem}
	Note that Lemma \ref{lm8} answers the above questions in some sense.
\end{rem}

\end{document}